\documentclass{article}
\usepackage{mathtext}
\usepackage{fontenc}
\usepackage{amssymb}
\usepackage{amsmath}
\usepackage{amsthm}
\usepackage{floatflt}
\usepackage{wrapfig}
\usepackage{caption2}
\usepackage{comment}
\usepackage{graphicx}
\usepackage{multirow,mathdots}
\usepackage{ulem}
\usepackage{tikz}
\usetikzlibrary{positioning, arrows, shapes, decorations.pathreplacing, calc}
\usetikzlibrary{matrix}

\tikzset{x=0.9em,y=1.1em} 
\def\ccolor#1{ ++(1,0) node [rectangle,  minimum size=1] {\raisebox{-0.25em}[0.2em][0em]{\makebox[0mm][c]{#1}}} }


\usepackage{cmap}
\usepackage[unicode, pdftex]{hyperref}

\lineskip=1.46\lineskip
\baselineskip=1.46\baselineskip
\textwidth160mm
\textheight205mm
\hsize = 160 mm
\vsize = 205 mm
\hoffset = -15 mm
\voffset = -15 mm
\tolerance 1000

\newtheorem{theorem}{Theorem}
\newtheorem{lemma}{Lemma}

\newtheorem{corollary}{Corollary}
\newtheorem{observation}{Observation}
\newtheorem{property}{Property}

\newtheorem{remark}{Remark}

\begin{document}

\author{\bf Yuriy Tarannikov\footnote{Sobolev Institute of Mathematics, Siberian Branch of the Russian Academy of Sciences;
Lomonosov Moscow State University; e-mail: yutarann@gmail.com \newline
Theorems \ref{some_tight_values} and \ref{some_asymptotics} were obtained with the financial support of the Ministry of Science and Higher Education of Russia in the framework of the program of the Moscow Center for Fundamental and Applied Mathematics (contract No. 075--15--2022--284), Theorems \ref{tight_values} and \ref{th_asymp} were obtained at the expense of the Russian Science Foundation (grant No. 22--11--00266), \url{https://rscf.ru/ en/project/22-11-00266/}.}}
\title{On the number of partitions of the hypercube ${\bf Z}_q^n$ into large subcubes}

\date{}
\maketitle

\begin{abstract}
We prove that the number of partitions of the hypercube ${\bf Z}_q^n$ into $q^m$ subcubes of dimension $n-m$ each for fixed $q$, $m$ and growing $n$ is asymptotically equal to
$n^{(q^m-1)/(q-1)}$.

For the proof, we introduce the operation of the bang of a star matrix and demonstrate that any star matrix, except for a fractal, is expandable under some bang, whereas a fractal remains to be a fractal under any bang.
\end{abstract}

{\it Key words:} combinatorics, enumeration, asymptotics, partition, partition of hypercube, subcube, star pattern, star matrix, fractal matrix, associative block design, SAT, $k$-SAT.

MSC 05A18

\section{Introduction}

Let $q,m,n$ be integers, $q\ge 2$, $n\ge m\ge 0$.
The ($n-m$)-dimensional subcube in ${\bf Z}_q^n$ is a subset of ${\bf Z}_q^n$ such that some $m$ components are fixed, and each of the remaining $n-m$ components runs through all possible values from ${\bf Z}_q$.

In {\it partitioning into subcubes} each vector of ${\bf Z}_q^n$ must fall into exactly one subcube.
A partition into subcubes is called {\it Agievich-primitive}, or simply
{\it A-primitive}\footnote{Agievich in \cite{Agievich08} introduced the term ''primitive'' for this concept (in more general case of partitioning into affine subspaces), but the use of this term is controversial, since it rather characterizes a certain non-degeneracy of the partition. In addition, such non-degeneracy of the partition can be defined in different ways, and the word ''primitive'' is generally overloaded in mathematics. At the same time, giving another term also seems incorrect, therefore in \cite{Tar22} it was proposed to call such a partition {\it Agievich-primitive} or {\it A-primitive}. Note that in \cite{FHKIRSV23} the authors used the term ''{\it tight}'' for the same concept, which also does not seem to us to be ''tight''.}, if each component is fixed in at least one of the subcubes of the partition.

The most well-known problem is the partitioning problem into small-dimensional subcubes. Thus, if all subcubes of a partition of a Boolean cube have dimension $1$, then these subcubes are edges, and the partitions are called perfect matchings, and the problem of their number is well known. In \cite{ABP24}, the problems of partitioning a Boolean cube into subcubes are considered, mainly of small dimensions, which can also be different within a single partition.

Partitions into subcubes (not necessarily of the same dimension) with an additional irreducibility condition are studied in the paper \cite{FHKIRSV23}, motivated by the fact that partitions into subcubes in the binary case correspond to unsatisfiable Boolean CNFs of a special type. Note that the partitions into subcubes of codimension at most $k$, where $k$ is a constant, considered in our paper correspond in the above context to the unsatisfiable CNF of a special type from the $k$-SAT problem.

Partitions into one-dimensional subcubes in a not necessarily binary case are the subject of \cite{Pot11}.
Partitioning into subcubes of the same dimension
is a special case of $A(n,q,w,t)$-designs, the expression of the number of which through multidimensional permanents was considered in \cite{Pot14}.

The main subject of study in \cite{Tar22} was partitions into affine subspaces, and for partitions into subcubes, which are a special case of partitions into affine subspaces, the following statements were proved, oriented towards partitions into subcubes of the same large dimension.
\begin{theorem}\label{coord_thm}~\cite{Tar22}
Let $q\ge 2$. For any natural $m$ there exists the smallest natural $N$ such that for $n>N$ there are no A-primitive partitions of ${\bf Z}_q^n$ into $q^m$ subcubes of dimension $n-m$.
\end{theorem}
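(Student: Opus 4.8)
The plan is to reduce the statement to a single double-counting bound on $n$. First I would encode any partition of ${\bf Z}_q^n$ into $q^m$ subcubes of dimension $n-m$ as a \emph{star matrix}: a $q^m\times n$ array whose $i$-th row records the $i$-th subcube, placing the fixed value in a column where that subcube fixes the corresponding coordinate and a star $*$ in a column where the coordinate is free. Since every subcube has dimension exactly $n-m$, it fixes exactly $m$ of the $n$ coordinates, so each row contains exactly $m$ non-star entries. Hence the total number of non-star entries in the matrix equals $q^m m$, a quantity that does not depend on $n$.

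Next I would translate the A-primitivity condition into the language of this matrix. By definition the partition is A-primitive precisely when every coordinate is fixed in at least one subcube, that is, when every one of the $n$ columns contains at least one non-star entry. Counting the non-star entries column by column therefore gives a total of at least $n$, whereas counting them row by row gives exactly $q^m m$. Comparing the two counts yields
\[
n \;\le\; q^m m .
\]

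Consequently, for every $n>q^m m$ there is no A-primitive partition of ${\bf Z}_q^n$ into $q^m$ subcubes of dimension $n-m$. Thus the set of those $n$ for which such a partition exists is contained in $\{0,1,\dots,q^m m\}$ and is in particular finite. The set of thresholds $N$ enjoying the property ``no A-primitive partition exists for any $n>N$'' is then a nonempty subset of the natural numbers, since it contains $q^m m$, and by the well-ordering principle it has a least element, which is the smallest $N$ asserted by the theorem.

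I expect that there is no genuine obstacle here beyond setting up the bookkeeping correctly: the only real content is the two observations that the dimension constraint pins the total non-star count to $q^m m$ while A-primitivity forces each column to be hit, and everything else is immediate. I would only be careful to confirm that ``dimension $n-m$'' translates to exactly $m$ fixed coordinates per subcube and that A-primitivity is literally the column-hitting condition. The resulting bound $N\le q^m m$ is surely far from tight, but tightness is unnecessary for the existence statement; locating the true extremal $n$ would be a separate and harder question.
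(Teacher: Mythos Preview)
Your proof is correct. The paper itself does not prove this theorem---it is quoted from \cite{Tar22}---but it does record that \cite{Tar22} obtained the bound $N^{\rm coord}_q(m)\le m\cdot q^{m-1}$. Your double-counting yields the slightly weaker $N\le m\cdot q^{m}$; the missing factor of $q$ comes from the observation (Lemma~\ref{equality_in_single_column} in the present paper) that in every column of the star matrix each value of ${\bf Z}_q$ occurs equally often, so a column with any non-star entry has at least $q$ of them, not just one. Plugging that into your column count gives $qn\le m q^{m}$ and hence exactly the cited bound. Either way the existence claim follows, and you are right that locating the true value $N^{\rm coord}_q(m)=\frac{q^m-1}{q-1}$ is a separate and harder question---indeed that is what the remainder of the paper is devoted to.
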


We introduce the following notation:
\begin{itemize}
\item{$N^{\rm coord}_q(m)$ --- the smallest $N$ from Theorem \ref{coord_thm};}
\item{$c^{\rm coord}_q(n,m)$ --- the number of different unordered partitions of ${\bf Z}_q^n$ into $q^m$ subcubes of dimension $n-m$;}
\item{$c^{\rm coord*}_q(n,m)$ --- the number of different unordered A-primitive partitions of ${\bf Z}_q^n$ into $q^m$ subcubes of dimension $n-m$.}
\end{itemize}

\begin{theorem}~\cite{Tar22}
The following formula is valid
\begin{equation}\label{eq:coord_formula_modified}
c^{\rm coord}_q(n,m)=\sum\limits_{h=m}^{N^{\rm coord}_q(m)} \binom{n}{h} c^{\rm coord*}_q(h,m).
\end{equation}
\end{theorem}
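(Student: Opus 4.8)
The plan is to prove the identity by a bijection that decomposes an arbitrary partition according to the set of coordinates it actually fixes. Given a partition $P$ of ${\bf Z}_q^n$ into $q^m$ subcubes of dimension $n-m$, I would define its \emph{support} $S(P)\subseteq\{1,\dots,n\}$ to be the set of coordinates fixed in at least one subcube of $P$. Since each subcube has dimension $n-m$ and hence fixes exactly $m$ coordinates, those fixed coordinates all lie in $S(P)$; restricting each subcube of $P$ to the coordinates of $S(P)$ therefore keeps the same $m$ coordinates fixed and leaves the remaining $|S(P)|-m$ coordinates of $S(P)$ free. Writing $h=|S(P)|$, this produces a collection $P|_{S(P)}$ of $q^m$ subcubes of dimension $h-m$ inside ${\bf Z}_q^{S(P)}\cong{\bf Z}_q^h$.

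Next I would verify that $P|_{S(P)}$ is itself an A-primitive partition. The key observation is that membership of a point in a subcube of $P$ depends only on its fixed coordinates, all of which lie in $S(P)$; hence any two points of ${\bf Z}_q^n$ that agree on $S(P)$ belong to the same subcube. From this it follows at once that the restricted subcubes are pairwise disjoint and cover ${\bf Z}_q^{S(P)}$, so $P|_{S(P)}$ is a genuine partition, and it is A-primitive because, by the very definition of $S(P)$, every coordinate of $S(P)$ is fixed in at least one subcube of $P$ and hence of $P|_{S(P)}$.

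Conversely, for a fixed $h$-subset $S$ and an A-primitive partition $Q$ of ${\bf Z}_q^S$ into $q^m$ subcubes of dimension $h-m$, the pull-back that keeps every coordinate outside $S$ free yields a partition of ${\bf Z}_q^n$ into $q^m$ subcubes of dimension $n-m$ whose support is exactly $S$: A-primitivity of $Q$ guarantees that no coordinate of $S$ is dropped, and no coordinate outside $S$ is ever fixed. Restriction and pull-back are mutually inverse, so $P\mapsto(S(P),P|_{S(P)})$ is a bijection between the partitions counted by $c^{\rm coord}_q(n,m)$ and the pairs consisting of an $h$-subset of $\{1,\dots,n\}$ together with an A-primitive partition counted by $c^{\rm coord*}_q(h,m)$. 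Grouping by $h$ gives
\begin{equation*}
c^{\rm coord}_q(n,m)=\sum_{h=m}^{n}\binom{n}{h}\,c^{\rm coord*}_q(h,m),
\end{equation*}
where $h\ge m$ since $q^m$ disjoint subcubes of nonnegative dimension already force at least $m$ fixed coordinates. Finally I would invoke Theorem \ref{coord_thm}: for $h>N^{\rm coord}_q(m)$ there are no A-primitive partitions, so $c^{\rm coord*}_q(h,m)=0$ and the upper limit of the sum collapses to $N^{\rm coord}_q(m)$, yielding \eqref{eq:coord_formula_modified}.

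The main obstacle, modest as it is, lies in confirming that the restriction map is well defined as a map of \emph{unordered} partitions and that it is genuinely invertible, i.e.\ that the support together with the restricted A-primitive partition carries exactly the information needed to reconstruct $P$ and nothing more. Once the principle that membership in a subcube depends only on its fixed coordinates is established, both the partition property of $P|_{S(P)}$ and the correctness of the inverse pull-back follow without further difficulty, and the summation identity is then immediate.
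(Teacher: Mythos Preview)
Your argument is correct. The paper does not actually supply a proof of this statement: the theorem is quoted from \cite{Tar22} and used as a black box, so there is no in-paper proof to compare against. The support-and-restriction bijection you describe is exactly the standard way such formulas are established, and every step (restriction preserves the $m$ fixed coordinates, A-primitivity of the restriction, invertibility via the free pull-back, and the truncation of the sum at $N^{\rm coord}_q(m)$ via Theorem~\ref{coord_thm}) is sound.
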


\begin{theorem}\label{general_form_of_asymptotics}~\cite{Tar22}
Let $q$ and $m$ be fixed, $n\to\infty$. Then the following asymptotics holds
\begin{equation*}
c^{\rm coord}_q(n,m)\sim C'n^{N^{\rm coord}_q(m)},
\end{equation*}
where $C'=\frac{c^{\rm coord*}_q(N^{\rm coord}_q(m),m)}{N^{\rm coord}_q(m)!}$.
\end{theorem}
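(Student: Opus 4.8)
The plan is to read off the asymptotics directly from the finite closed form in \eqref{eq:coord_formula_modified}. Write $N=N^{\rm coord}_q(m)$ for brevity. The crucial structural feature is that, with $q$ and $m$ fixed, $N$ is a constant independent of $n$ --- this is exactly the content of Theorem \ref{coord_thm} --- so the sum
\[
c^{\rm coord}_q(n,m)=\sum_{h=m}^{N} \binom{n}{h}\, c^{\rm coord*}_q(h,m)
\]
has a fixed number of summands, and the coefficients $c^{\rm coord*}_q(h,m)$ do not depend on $n$. Hence, as a function of $n$, $c^{\rm coord}_q(n,m)$ is a fixed polynomial, and determining its asymptotics reduces to identifying its degree and leading coefficient.

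First I would record that, for each fixed $h$,
\[
\binom{n}{h}=\frac{n(n-1)\cdots(n-h+1)}{h!}=\frac{n^h}{h!}\bigl(1+O(1/n)\bigr)
\]
is a polynomial in $n$ of degree exactly $h$ with leading coefficient $1/h!$. Substituting this into the sum, $c^{\rm coord}_q(n,m)$ becomes a polynomial in $n$ whose term of highest possible degree comes from the summand $h=N$ and equals $\frac{c^{\rm coord*}_q(N,m)}{N!}\,n^{N}$, while every other summand (having $h\le N-1$) contributes only terms of degree at most $N-1$. Thus it suffices to show that this top coefficient is nonzero, i.e. that $c^{\rm coord*}_q(N,m)>0$.

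To see positivity I would argue from the minimality in the definition of $N=N^{\rm coord}_q(m)$. By definition $N$ is the smallest natural number such that for every $n>N$ there are no A-primitive partitions; if there were also no A-primitive partition at $n=N$ itself, then $N-1$ would already satisfy the same defining property (there would be no A-primitive partition for any $n\ge N$), contradicting minimality. Hence at $n=N$ an A-primitive partition exists and $c^{\rm coord*}_q(N,m)\ge 1$. Consequently the leading term of the polynomial does not vanish, the degree is exactly $N$, and
\[
c^{\rm coord}_q(n,m)\sim \frac{c^{\rm coord*}_q(N,m)}{N!}\,n^{N}=C'n^{N^{\rm coord}_q(m)},
\]
as claimed.

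As for the main obstacle: the asymptotic step itself is routine once the exact formula \eqref{eq:coord_formula_modified} and the finiteness of $N$ are in hand, since it is merely the statement that a polynomial is asymptotic to its leading monomial. The only point requiring a genuine (if short) argument rather than a one-line estimate is the nonvanishing of the leading coefficient, handled above via the minimality of $N$; everything else is the observation that a finite sum of scaled binomial coefficients is dominated by its highest-index term. All of the real difficulty has been pushed into the earlier results --- the exact enumeration formula and the boundedness guaranteed by Theorem \ref{coord_thm} --- which we are entitled to assume here.
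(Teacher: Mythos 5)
Your proposal is correct. Note that this theorem is quoted in the present paper from \cite{Tar22} without proof, so there is no in-paper argument to compare against; but your derivation --- reading the asymptotics off the finite sum in (\ref{eq:coord_formula_modified}), whose length $N^{\rm coord}_q(m)$ is independent of $n$ by Theorem \ref{coord_thm}, and then securing the nonvanishing of the leading coefficient $c^{\rm coord*}_q(N,m)/N!$ via the minimality of $N$ --- is exactly the intended route, and the minimality argument is the one genuinely non-routine point, which you handle correctly.
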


Also in \cite{Tar22} the bounds $\frac{q^m-1}{q-1}\le N^{\rm coord}_q(m)\le m\cdot q^{m-1}$ were obtained and exact values $N^{\rm coord}_q(2)=q+1$ were established.

The main result of this paper is the following Theorem \ref{th_asymp}, a complete proof
\footnote{Note that in 2024 we sent the following Theorems \ref{some_tight_values} and \ref{some_asymptotics} without proofs to the conference ''The Problems of Theoretical Cybernetics''.

\begin{theorem}\label{some_tight_values}
The following tight values hold: $N^{\rm coord}_2(4)=15$, $N^{\rm coord}_2(5)=31$,
$N^{\rm coord}_q(3)=q^2+q+1$,
$c^{\rm coord*}_2(15,4)=15!$,
$c^{\rm coord*}_2(31,5)=31!$,
$c^{\rm coord*}_q(q^2+q+1,3)=(q^2+q+1)!$.
\end{theorem}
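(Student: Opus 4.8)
The plan is to deduce both the threshold values and the enumeration from a single recursive inequality,
\[
N^{\rm coord}_q(m)\le 1+q\,N^{\rm coord}_q(m-1),
\]
together with a matching recursive construction and a rigidity statement for the extremal partitions. The three target families are exactly those produced by iterating this recursion from the base values: since $N^{\rm coord}_q(1)=1$ and $N^{\rm coord}_q(2)=q+1$ are known from \cite{Tar22}, one gets $N^{\rm coord}_q(3)\le 1+q(q+1)=q^2+q+1$, and specializing to $q=2$ and iterating, $N^{\rm coord}_2(4)\le 1+2\cdot 7=15$ and $N^{\rm coord}_2(5)\le 1+2\cdot 15=31$; the lower bound $\frac{q^m-1}{q-1}\le N^{\rm coord}_q(m)$ quoted in the excerpt matches these, forcing equality. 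Throughout I would use the reformulation of a partition into codimension-$m$ subcubes as a star matrix with $q^m$ rows in which any two rows conflict (there is a column where both are nonstar with different entries) and every column contains a nonstar entry (A-primitivity), using that pairwise conflict together with the exact count $q^m$ already forces a partition.

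The heart is the recursion. Given an A-primitive partition of ${\bf Z}_q^n$ into $q^m$ subcubes of codimension $m$, suppose first that some coordinate is fixed by every subcube (a star-free column). Restricting to each of the $q$ hyperplanes $x_1=v$ turns the subcubes fixing $x_1=v$ into $q^{m-1}$ subcubes of codimension $m-1$ and deletes the rest, giving a partition of ${\bf Z}_q^{n-1}$ into $q^{m-1}$ subcubes of codimension $m-1$; after deleting its unused coordinates it is A-primitive and uses at most $N^{\rm coord}_q(m-1)$ coordinates. By A-primitivity every coordinate other than $1$ is fixed by some subcube, hence is used in at least one of the $q$ hyperplane partitions, so $n-1\le q\,N^{\rm coord}_q(m-1)$. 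The difficulty is that a star-free column need not exist---already at $q=2,m=2,n=3$ every coordinate is free in some edge---so this clean restriction must be replaced by the bang of a star matrix, which performs the same hyperplane passage inside the larger class of mixed-codimension star matrices, where the argument closes. Showing that the bang of any non-fractal star matrix is expandable, while the bang of a fractal is again a fractal, is the step I expect to be the main obstacle: the naive double counting of conflicting pairs yields only the weaker bound $mq^{m-1}$, so the sharp threshold genuinely requires this structural dichotomy rather than a counting estimate.

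For the enumeration I would run the recursion in reverse. The structure theorem identifies the partitions attaining $n=\tfrac{q^m-1}{q-1}$ with fractals, and a fractal decomposes canonically into one distinguished star-free coordinate together with $q$ labelled sub-blocks, one per value of that coordinate, each of which is an extremal A-primitive partition for parameter $m-1$ on $N^{\rm coord}_q(m-1)$ of the remaining coordinates. Because this decomposition is forced (the distinguished coordinate and the assignment of the remaining coordinates to the $q$ labelled blocks are uniquely recoverable), the counts satisfy
\[
c^{\rm coord*}_q(N(m),m)=N(m)\cdot\frac{(N(m)-1)!}{\big(N(m-1)!\big)^{q}}\cdot\Big(c^{\rm coord*}_q(N(m-1),m-1)\Big)^{q},
\]
where $N(m)=N^{\rm coord}_q(m)$ and $N(m)-1=q\,N(m-1)$. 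Substituting the inductive hypothesis $c^{\rm coord*}_q(N(m-1),m-1)=N(m-1)!$ collapses the middle factor and gives $c^{\rm coord*}_q(N(m),m)=N(m)\cdot(N(m)-1)!=N(m)!$, with base case $c^{\rm coord*}_q(1,1)=1$. Applied to $(q,3)$ and, for $q=2$, to $(2,4)$ and $(2,5)$, this yields $(q^2+q+1)!$, $15!$ and $31!$.

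Since the whole scheme rests on the rigidity of fractals, I would finally verify the structure theorem precisely in the range needed---$m\le 3$ for general $q$ and $m\le 5$ for $q=2$---confirming that no non-fractal extremal star matrix survives the bang. This is where the bulk of the case analysis lives and is the principal risk in the argument, since everything else reduces to the telescoping induction above.
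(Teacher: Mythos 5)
Your outline reproduces the paper's strategy in miniature (bang operation, fractal rigidity, and the telescoping multinomial identity that collapses to $N(m)!$ --- the latter is literally the computation in Property~\ref{number_of_different_partitions_for_fractal}), but it is a plan rather than a proof: the two load-bearing claims are exactly the ones you defer as ``the main obstacle'' and ``the principal risk.'' Concretely, (a) the claim that every non-fractal A-primitive star matrix is expandable under some bang, equivalently that every matrix attaining the maximal number of columns is fractal, and (b) the claim that the fractal decomposition is forced, so that extremal partitions are counted exactly by $N(m)!$, are asserted but never argued. These are precisely what the paper spends its entire technical apparatus on: Lemmas~\ref{lemma1}--\ref{equality_for_sums by_modulo_in_two_columns} on column arithmetic, the transfractal machinery of Lemma~\ref{rows_of_transfractal} and Corollary~\ref{corollary_rows_of_transfractal}, the induction of Lemma~\ref{only_transfractls} with the estimate (\ref{eq:estimation_of cancelled_allstars_columns}) on how many all-star columns a bang can destroy, and Corollary~\ref{non_fractals_are_expandable}. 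Without (a) you do not even obtain the upper bound $N^{\rm coord}_q(m)\le\frac{q^m-1}{q-1}$: your recursion $N(m)\le 1+qN(m-1)$ is only justified when a star-free column exists, and you correctly note that it need not --- so the recursion itself is unproven, and restricting to $m\le 3$ (or $m\le 5$ for $q=2$) does not make the required case analysis any more available, since Rivest's matrix sits at $q=2$, $m=3$.

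Two smaller corrections. First, your counterexample is wrong: for $q=2$, $m=2$, $n=3$ a star matrix has $4$ rows, $3$ columns and $8$ numerical entries, and by Lemma~\ref{equality_in_single_column} each column has even size, so some column has size $4$ and is star-free; the first genuine example with stars in every column is Rivest's $8\times 4$ matrix (Fig.~\ref{fig:star_matrix_of_Rivest}), i.e.\ $m=3$. Second, the paper does not in fact prove your recursion and then iterate it; it proves directly that any matrix with the maximal number of columns is non-expandable, hence fractal (Corollary~\ref{non_fractals_are_expandable}), and reads off both $N^{\rm coord}_q(m)$ and $c^{\rm coord*}_q$ from the fractal's structure --- note also that the bang stays inside the class of equal-codimension partitions (dimensions are re-equalized by the added columns), rather than passing through mixed-codimension matrices as you suggest. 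So the gap is genuine: what remains missing in your proposal is not routine verification but the structural dichotomy that constitutes the paper's main contribution.
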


\begin{theorem}\label{some_asymptotics} For $n\to\infty$ the following asymptotics are valid: $c^{\rm coord}_2(n,4)\sim n^{15}$, $c^{\rm coord}_2(n,5)\sim n^{31}$, $c^{\rm coord}_q(n,3)\sim n^{q ^2+q+1}$.
\end{theorem}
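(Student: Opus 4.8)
The plan is to obtain Theorem \ref{some_asymptotics} as an immediate corollary of the general asymptotic formula of Theorem \ref{general_form_of_asymptotics} combined with the exact values furnished by Theorem \ref{some_tight_values}. Recall that Theorem \ref{general_form_of_asymptotics} states, for fixed $q$ and $m$ as $n\to\infty$,
\begin{equation*}
c^{\rm coord}_q(n,m)\sim C'n^{N^{\rm coord}_q(m)},\qquad C'=\frac{c^{\rm coord*}_q(N^{\rm coord}_q(m),m)}{N^{\rm coord}_q(m)!}.
\end{equation*}
Thus the whole task reduces to reading off the exponent $N^{\rm coord}_q(m)$ and evaluating the leading constant $C'$ in each of the three cases.

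First I would substitute the tight values recorded in Theorem \ref{some_tight_values}. For $q=2$, $m=4$ we have $N^{\rm coord}_2(4)=15$ and $c^{\rm coord*}_2(15,4)=15!$, so $C'=15!/15!=1$ and the exponent is $15$; for $q=2$, $m=5$ we have $N^{\rm coord}_2(5)=31$ and $c^{\rm coord*}_2(31,5)=31!$, whence $C'=31!/31!=1$ and the exponent is $31$; and for general $q$, $m=3$ we have $N^{\rm coord}_q(3)=q^2+q+1$ and $c^{\rm coord*}_q(q^2+q+1,3)=(q^2+q+1)!$, so again $C'=(q^2+q+1)!/(q^2+q+1)!=1$ and the exponent is $q^2+q+1$. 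In each case the constant collapses to exactly $1$, which delivers the three asserted asymptotics $n^{15}$, $n^{31}$, $n^{q^2+q+1}$ (consistent with the general pattern $n^{(q^m-1)/(q-1)}$ announced in the abstract).

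The decisive structural feature that makes all three asymptotics so clean is the identity $c^{\rm coord*}_q(N^{\rm coord}_q(m),m)=N^{\rm coord}_q(m)!$ at the top dimension $N=N^{\rm coord}_q(m)$: the A-primitive partitions that attain the maximal coordinate-dimension are exactly as numerous as the permutations of the $N$ fixed coordinates, so no extra multiplicity survives into the leading term. Granting Theorem \ref{some_tight_values}, the present statement needs nothing beyond this substitution, and so this step carries no genuine obstacle of its own.

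The real difficulty lies upstream, in establishing Theorem \ref{some_tight_values} itself — the exact top dimensions $N^{\rm coord}_2(4)=15$, $N^{\rm coord}_2(5)=31$, $N^{\rm coord}_q(3)=q^2+q+1$ together with the matching factorial counts. That is where the bang/fractal machinery announced in the abstract must be deployed: one shows that every star matrix other than a fractal is expandable under some bang, while a fractal remains a fractal under any bang, which forces the extremal A-primitive partitions into the rigid, permutation-indexed fractal form that simultaneously pins down the value of $N$ and the count $N!$. I would regard the verification of those extremal values as the main obstacle of the overall argument; once they are in hand, Theorem \ref{some_asymptotics} follows in a single line.
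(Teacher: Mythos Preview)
Your proposal is correct and matches the paper's approach: the paper does not prove Theorem~\ref{some_asymptotics} separately but notes that it is a special case of the general asymptotics (Theorem~\ref{th_asymp}), which in turn is obtained exactly as you describe --- by plugging the tight values of Theorem~\ref{tight_values} (of which Theorem~\ref{some_tight_values} is the special case) into Theorem~\ref{general_form_of_asymptotics}, so that the constant $C'$ collapses to $1$. Your identification of the bang/fractal argument as the real content upstream is also on the mark.
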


The proofs of Theorems \ref{some_tight_values} and \ref{some_asymptotics}, originally obtained by a substantially different method, are not given in this paper, since their assertions are special cases of Theorems \ref{tight_values} and \ref{th_asymp}, respectively.} of which will be obtained at the end of the paper after introducing the necessary concepts and proving auxiliary assertions.

\begin{theorem}\label{th_asymp}
Let $q,m$ be fixed positive integer numbers, $q>1$. Then for $n\to\infty$ the following asymptotics holds:
\begin{equation}\label{eq:main_asymptotics}
c^{\rm coord}_q(n,m)\sim n^{\frac{q^m-1}{q-1}}.
\end{equation}
\end{theorem}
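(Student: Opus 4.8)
The plan is to reduce the asymptotic claim to two exact equalities and then establish those. By Theorem \ref{general_form_of_asymptotics} we have $c^{\rm coord}_q(n,m)\sim C'n^{N^{\rm coord}_q(m)}$ with $C'=c^{\rm coord*}_q(N^{\rm coord}_q(m),m)/N^{\rm coord}_q(m)!$, so \eqref{eq:main_asymptotics} follows once I prove
\begin{equation*}
N^{\rm coord}_q(m)=\frac{q^m-1}{q-1}\qquad\text{and}\qquad c^{\rm coord*}_q\!\left(\frac{q^m-1}{q-1},m\right)=\left(\frac{q^m-1}{q-1}\right)!,
\end{equation*}
the second equality forcing $C'=1$. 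Throughout I write $R_m=\frac{q^m-1}{q-1}=1+q+\dots+q^{m-1}$, so that $R_m=1+qR_{m-1}$ and $R_1=1$.

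I would encode each A-primitive partition into $q^m$ subcubes of dimension $n-m$ as a star matrix: a $q^m\times n$ array over $\mathbf{Z}_q\cup\{*\}$ whose rows are the subcubes, each row carrying exactly $m$ non-star symbols (its fixed coordinates). Disjointness of the subcubes is equivalent to requiring that any two rows disagree by carrying distinct non-star symbols in some common column, and A-primitivity is equivalent to every column containing at least one non-star symbol. The lower bound $N^{\rm coord}_q(m)\ge R_m$ from \cite{Tar22} is realized by the fractal star matrix, defined recursively: one root column splits the $q^m$ rows into $q$ blocks according to its value, and on a fresh coordinate set, disjoint from the root and across blocks, each block carries a fractal of parameter $m-1$; this uses $1+qR_{m-1}=R_m$ columns, each fixed somewhere, so the result is a valid A-primitive partition.

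For the matching upper bound I would use the bang operation together with the dichotomy announced in the abstract: every star matrix that is not a fractal is expandable under some bang, meaning it can be transformed into an A-primitive partition on a strictly larger coordinate set, whereas a fractal is carried to a fractal under every bang and therefore cannot be expanded. Granting this, let a star matrix attain the maximal number of columns $N^{\rm coord}_q(m)$; being maximal it is not expandable, so by the dichotomy it must be a fractal, and every fractal has exactly $R_m$ columns by its recursive definition. Hence $N^{\rm coord}_q(m)=R_m$. I expect the main obstacle to be the proof of the dichotomy itself: one must show that any departure from the rigid recursive fractal shape leaves room to introduce a further fixed coordinate without breaking disjointness or coverage, while arranging the bang so that it strictly simplifies the matrix yet preserves the property of being an A-primitive partition; controlling this simultaneous expansion and reduction is the delicate part.

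Finally I would compute $c^{\rm coord*}_q(R_m,m)$. By the upper-bound analysis every A-primitive partition of the extremal size $R_m$ is a fractal, so it suffices to count fractals on a fixed set of $R_m$ labelled coordinates. Each fractal has a unique root, namely the only column fixed in all $q^m$ rows, and its block decomposition is thereby determined; conversely a fractal is assembled by choosing the root ($R_m$ ways), distributing the remaining $qR_{m-1}=R_m-1$ coordinates into $q$ value-labelled blocks of size $R_{m-1}$ each, and placing a fractal of parameter $m-1$ on each block. This yields the recursion
\begin{equation*}
c^{\rm coord*}_q(R_m,m)=R_m\cdot\frac{(qR_{m-1})!}{(R_{m-1}!)^q}\cdot\big(c^{\rm coord*}_q(R_{m-1},m-1)\big)^q,
\end{equation*}
which together with the base case $c^{\rm coord*}_q(R_1,1)=1=R_1!$ gives $c^{\rm coord*}_q(R_m,m)=R_m!$ by induction, since $qR_{m-1}=R_m-1$. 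Therefore $C'=1$, and $c^{\rm coord}_q(n,m)\sim n^{R_m}=n^{(q^m-1)/(q-1)}$, as claimed.
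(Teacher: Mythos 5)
Your high-level architecture is exactly the paper's: reduce (\ref{eq:main_asymptotics}) via Theorem \ref{general_form_of_asymptotics} to the two exact values $N^{\rm coord}_q(m)=\frac{q^m-1}{q-1}$ and $c^{\rm coord*}_q\bigl(\frac{q^m-1}{q-1},m\bigr)=\bigl(\frac{q^m-1}{q-1}\bigr)!$ (these are precisely the paper's Theorem \ref{tight_values}); realize the lower bound by the fractal matrix; deduce the upper bound from ``a matrix with the maximal number of columns is not expandable, hence is a fractal''; and count the extremal partitions by a recursion that is word-for-word the paper's proof of Property \ref{number_of_different_partitions_for_fractal} (your factor $(qR_{m-1})!/(R_{m-1}!)^q$ is the paper's multinomial coefficient). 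But there is a genuine gap, and you name it yourself: the dichotomy ``every non-fractal A-primitive star matrix of a partition into equal-dimension subcubes is expandable under some bang'' is \emph{granted}, not proved. That statement is not a citable background fact --- it is the entire technical content of the paper, namely Corollary \ref{non_fractals_are_expandable}. The paper's proof of it occupies its core sections: Lemmas \ref{lemma1} and \ref{equality_in_single_column} together with a lemma on sums $(m_{r,i}+\pi(m_{r,j}))\bmod q$ over two overlapping columns; Lemma \ref{rows_of_transfractal} and Corollary \ref{corollary_rows_of_transfractal}, showing that all rows of a transfractal coincide outside it; the key inductive Lemma \ref{only_transfractls}, which bangs a column of size $kq$ and, by estimating the number of columns destroyed by the bang (the banged column plus those turned into all-stars) as at most the number $k$ of added columns --- with equality only in a rigid configuration --- concludes that in a non-expandable matrix every column is the leading column of a transfractal of power-of-$q$ size; and finally Corollary \ref{non_fractals_are_expandable}, where Lemma \ref{lemma1} forces these transfractals to nest into a single fractal. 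None of this machinery, nor any substitute for it, appears in your proposal.

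Consequently, what you have actually established is only the lower bound $c^{\rm coord}_q(n,m)\ge n^{(q^m-1)/(q-1)}(1+o(1))$, which the paper itself treats as obvious in Section \ref{sec:asymp_lower_bound}; the upper bound --- the new content of the theorem --- is assumed. Two smaller points. First, you also take for granted that a bang of an A-primitive star matrix is again an A-primitive star matrix of a partition into the same number of subcubes of the same dimension; without verifying this (the paper does so when defining the operation), expandability would not contradict maximality of the column count. Second, the half of the dichotomy you state as ``a fractal is carried to a fractal under every bang'' is not actually needed for your argument: maximality alone yields non-expandability, and indeed the paper obtains the non-expandability of fractals for free from the finiteness guaranteed by Theorem \ref{coord_thm} (Observation \ref{only_fractal_is_nonexpandable}) rather than by direct verification.
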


\section{Star matrices of partitions and their properties}

{\it The star pattern} of a subcube of ${\bf Z}_q^n$ is the vector of length $n$ over ${\bf Z}_q\cup \{*\}$, where the elements of
${\bf Z}_q$ correspond to fixed components, while $*$ corresponds to a
''free'' component.

For example, the vector $(0,*,1,0,*)$ is a star pattern of the following subcube in ${\bf Z}_2^5$:
\begin{equation*}
\begin{array}{ccc}
\{&(0,0,1,0,0),&\cr
 &(0,0,1,0,1),&\cr
&(0,1,1,0,0),&\cr
&(0,1,1,0,1)&\}.
\end{array}
\end{equation*}
The matrix whose rows contain the star patterns of all subcubes of a partition is called the {\it star matrix} of this partition.

For example, the star matrix
\begin{equation*}
\left(
\begin{array}{ccc}
0, & 0, &*\cr
0, & 1, &*\cr
1, & *, &0\cr
1, & *, &1
\end{array}
\right)
\end{equation*}
defines a partition of ${\bf Z}_2^3$ into $2^2$ subcubes of dimension $3-2=1$ each.

It is easy to see that a partition of ${\bf Z}_q^n$ into $q^m$ subcubes of the same dimension $n-m$ is defined by a star matrix with exactly $q^m$ rows and $n$ columns, and is A-primitive if and only if its star matrix does not contain a column of only $*$. The star matrix of an A-primitive partition will also be called {\it A-primitive.}

Note that a special case of partitioning into subcubes are {\it associative block designs (ABDs),} which were introduced by Rivest \cite{Riv1974} for use in hashing algorithms and studied in a number of papers (see, for example, \cite{Brou1978,vL1985,vLW2001,Pout1986}).
ABD is a partition of ${\bf Z}_2^n$ into subcubes of the same dimension with the additional requirement that each column of the partition matrix contains the same number of stars. It is obvious from the definition that an ABD is an A-primitive partition. In the works on ABDs, elements of the technique of studying star matrices were developed, which are also useful in a more general case than ABDs.

{\it The size of a column} of a star matrix is the quantity of numbers in it. {\it The overlap} of two columns is the set of rows in which these columns simultaneously contain numerical values, and the number of such rows is the {\it overlap size.} If the overlap size of two columns is zero, then we say that these columns {\it do not overlap,} and if the overlap size coincides with the size of one of the columns, then we say that the column of smaller size {\it is inserted} into the column of larger size.
A submatrix of width $1$, consisting of all the numbers included in the column, and only of them, is called the {\it rod} of the column.

The statement of the Theorem \ref{th_asymp} will be a consequence of the Theorem \ref{tight_values},
in which the exact values of the quantities $N^{\rm coord}_q(m)$ and
$c^{\rm coord*}_q\left(N^{\rm coord}_q(m),m\right)$ will be established.
In turn, to establish the values written out in the formulation of the Theorem \ref{tight_values},
an analysis of A-primitive star matrices
of size $q^m \times N^{\rm coord}_q(m)$ will be performed.
Let us formulate and prove several lemmas on the structure of star matrices that we need.

\begin{lemma}\label{lemma1}
In the star matrix of the partition of the hypercube ${\bf Z}_q^n$, for any two distinct rows, there is a column that has different values from ${\bf Z}_q$ in these rows.
\end{lemma}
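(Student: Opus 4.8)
The plan is to argue by contraposition, exploiting the single structural fact available at this point: the rows of the star matrix of a partition are the star patterns of pairwise disjoint subcubes. First I would note that two distinct rows are the star patterns of two distinct subcubes $A$ and $B$ of the partition (distinct star patterns define distinct subcubes), and since the subcubes of a partition are pairwise disjoint, $A\cap B=\varnothing$. The lemma then amounts to saying that this disjointness cannot occur unless some column separates the two rows by two distinct numerical values.

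The key step is to show that the \emph{absence} of a separating column forces $A$ and $B$ to intersect. So I would assume, toward a contradiction, that in every column $j$ at least one of the two entries equals $*$, or else the two numerical entries coincide, and then build a point lying in both subcubes. Concretely, I would define $x\in{\bf Z}_q^n$ column by column: in column $j$ set $x_j$ to the numerical value carried by row $A$ if it has one; otherwise to the numerical value carried by row $B$ if it has one; and otherwise, when both entries are $*$, to any fixed element of ${\bf Z}_q$. By the standing assumption, in any column where both entries are numerical they agree, so this prescription is consistent and the resulting $x$ respects every fixed coordinate of $A$ and every fixed coordinate of $B$; hence $x\in A$ and $x\in B$, i.e. $x\in A\cap B$, contradicting $A\cap B=\varnothing$.

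The contradiction yields a column $j$ in which both rows carry numerical values from ${\bf Z}_q$ and these values differ, which is exactly the assertion of the lemma. I do not expect any real obstacle here: the only point requiring care is the three-case bookkeeping ($*/*$, exactly one $*$, no $*$) when defining the common point $x$, and the verification that the ``both numerical'' case of the construction never clashes with the separating condition. Everything else is forced by the disjointness of the parts of a partition, which is precisely the hypothesis we are allowed to use.
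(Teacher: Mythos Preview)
Your proof is correct and follows essentially the same approach as the paper: assume no column separates the two rows by distinct numerical values, then fill in the stars to produce a common vector lying in both subcubes, contradicting disjointness of the partition. The paper phrases this more tersely (``replace the stars in these rows with numbers so that both rows become the same''), while you spell out the column-by-column construction of $x$ explicitly, but the argument is the same.
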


\begin{proof}
Suppose the opposite. Let there be two distinct rows in the star matrix of the partition for which there is no column that has different values from ${\bf Z}_q$ in these rows. Then
we can replace the stars in these rows with numbers so that both rows become the same. The resulting
vector belongs to both subcubes defined by the star patterns of the rows,
therefore these subcubes intersect, which is impossible in the partition.
\end{proof}

\begin{lemma}\label{equality_in_single_column}
In the star matrix $M$ of the partition of the hypercube ${\bf Z}_q^n$ into subcubes of the same dimension, in any column all values from ${\bf Z}_q$ occur the same number of times.
\end{lemma}

\begin{proof}
Consider the $i$th column of the matrix $M$. If some row of the star matrix $M$ has $*$ in the $i$th column, then the corresponding subcube for each $a\in {\bf Z}_q$ contains exactly $q^{n-m-1}$ vectors with the value $a$ in the $i$th column.
If some row of the star matrix has $a$ in the $i$th column, $a\in {\bf Z}_q$, then
all $q^{n-m}$ vectors of the corresponding subcube have $a$ in the $i$th column.
Any vector of ${\bf Z}_q^n$ belongs to exactly one subcube of the partition. This implies the assertion of the Lemma \ref{equality_in_single_column}.
\end{proof}

\begin{lemma}\label{equality_for_sums by_modulo_in_two_columns}
Let $M$ be the star matrix of the partition of the hypercube ${\bf Z}_q^n$ into subcubes of the same dimension, $i$ and $j$ be the numbers of two of its distinct columns, and $\pi$ be some permutation from $S_q$.
Consider the set $R$ of all rows from $M$ that simultaneously have numerical values from ${\bf Z}_q$ in columns $i$ and $j$. Then the sum $(m_{r,i}+\pi(m_{r,j}))\pmod{q}$, taken over all rows $r$ from $R$, takes each value from ${\bf Z}_q$ the same number of times.
\end{lemma}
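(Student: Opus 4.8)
The plan is to prove the statement by a double counting of the vectors of ${\bf Z}_q^n$, in the same spirit as the proof of Lemma \ref{equality_in_single_column} but carried out with the two columns $i,j$ simultaneously. First I would fix a target value $c\in{\bf Z}_q$ and count, in two different ways, the number of vectors $v\in{\bf Z}_q^n$ satisfying $v_i+\pi(v_j)\equiv c\pmod q$. Globally this count is immediate: choosing all $n-1$ coordinates of $v$ other than the $i$th freely and then solving for $v_i$ gives a unique completion, so there are exactly $q^{n-1}$ such vectors, independently of $c$. (It is essential here that $i\ne j$, so that $v_i$ and $v_j$ are genuinely distinct coordinates.)

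Next, since every vector lies in exactly one subcube of the partition, I would recompute the same quantity subcube by subcube, classifying each row $r$ by the pattern of columns $i,j$. If row $r$ has a $*$ in at least one of these two columns, then on its subcube the expression $v_i+\pi(v_j)$ is a balanced function of the free coordinate(s): whether both $v_i,v_j$ are free, or exactly one is free while the other is pinned to a constant, for every $c$ exactly a $1/q$ fraction of the $q^{n-m}$ vectors of the subcube satisfy $v_i+\pi(v_j)\equiv c$, that is, exactly $q^{n-m-1}$ of them, independently of $c$.

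If instead $r\in R$, so both $m_{r,i}$ and $m_{r,j}$ are numerical, then $v_i+\pi(v_j)=m_{r,i}+\pi(m_{r,j})$ is constant on the entire subcube, so that subcube contributes all of its $q^{n-m}$ vectors when $m_{r,i}+\pi(m_{r,j})\equiv c$ and none otherwise. Writing $N_c$ for the number of rows $r\in R$ with $m_{r,i}+\pi(m_{r,j})\equiv c$ and $t=q^m-|R|$ for the number of remaining rows, summing the per-subcube contributions and equating with the global count yields $t\,q^{n-m-1}+N_c\,q^{n-m}=q^{n-1}$ for every $c$. Since the first summand and the right-hand side are independent of $c$, so is $N_c$; hence each value of $(m_{r,i}+\pi(m_{r,j}))\bmod q$, as $r$ ranges over $R$, is attained by exactly $|R|/q$ rows, which is the assertion.

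I expect the only real work to be the bookkeeping in the classification of rows, and in particular the verification that $v_i+\pi(v_j)$ is balanced over a subcube whenever at least one of the two columns is free. The one-free-column case is precisely where the permutation $\pi$ enters: if, say, column $j$ is free while column $i$ is fixed to a value $a$, the condition becomes $\pi(v_j)\equiv c-a$, which has the unique solution $v_j=\pi^{-1}(c-a)$ exactly because $\pi$ is a bijection of ${\bf Z}_q$. This is the step where the hypothesis $\pi\in S_q$ is genuinely used, and it is the point I would state most carefully; the remaining cases hold for any map $\pi$.
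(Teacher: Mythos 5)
Your proof is correct and follows essentially the same route as the paper's: a double count over the partition, observing that any subcube with a star in column $i$ or $j$ contributes a balanced count of values of $v_i+\pi(v_j)$, so the subcubes indexed by $R$, on each of which the sum is constant and which all have equal size, must themselves be balanced. The only difference is presentational — you make the counts explicit ($q^{n-1}$, $q^{n-m-1}$, and the equation $t\,q^{n-m-1}+N_c\,q^{n-m}=q^{n-1}$) where the paper argues qualitatively, and you spell out the role of the bijectivity of $\pi$, which the paper leaves implicit.
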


\begin{proof}
If for all vectors $a=(a_1,\dots,a_n)$ from ${\bf Z}_q^n$ we consider the sum $(a_i+\pi(a_j))\pmod{q}$, then this sum will obviously take each of the $q$ values from ${\bf Z}_q$ the same number of times.
If we consider such a sum for all vectors of a subcube whose star pattern contains a star in at least one of the two components $i$ and $j$, then this sum will also obviously take each of the $q$ values from ${\bf Z}_q$ the same number of times, because the vectors from the subcube are combined in this case into subsets of $q$ vectors that differ only in the component corresponding to the star, and the sum $(a_i+\pi(a_j))\pmod{q}$ on the vectors of the subset takes each of $q$ values once (because the sum modulo $q$ is a group operation). Consequently, for the set of subcubes corresponding to the rows from $R$, the sum $(a_i+\pi(a_j))\pmod{q}$ over all vectors of these subcubes must take each of the $q$
values from ${\bf Z}_q$ the same number of times. However, for vectors from the subcube corresponding to row $r$ of $R$, the sum $(a_i+\pi(a_j))\pmod{q}$ takes the same value, equal to $(m_{r,i}+\pi(m_{r,j}))\pmod{q}$, and by assumption, the subcubes contain the same number of vectors. This implies the assertion of the Lemma \ref{equality_for_sums by_modulo_in_two_columns}.
\end{proof}

Lemmas close to those given above, in some ways more general, in some ways more specific, are contained in
\cite{Riv1974,Brou1978,vL1985,vLW2001,Pout1986,Pot22}.

Naturally, we could have formulated our lemmas in a more general form (in particular, for collections of more than two columns), but we did not pursue this goal, limiting ourselves to the formulation and proof of the lemmas in the minimal form necessary for the completeness of the presentation of our subsequent results.

\section{An obvious lower asymptotic bound for the number of partitions}\label{sec:asymp_lower_bound}

The lower asymptotic bound
\begin{equation}\label{eq:asymp_lower_bound}
c^{\rm coord}_q(n,m)\ge n^{\frac{q^m-1}{q-1}}(1+o(1))
\end{equation}
is obvious. Note that it is not necessary to prove it separately, because the asymptotics
of the quantity $c^{\rm coord}_q(n,m)$, presented in the formula (\ref{eq:main_asymptotics}) of the Theorem
\ref{th_asymp}, will be obtained automatically from the Theorem \ref{general_form_of_asymptotics} after we establish the exact values of the quantities $N^{\rm coord}_q(m)=\frac{q^m-1}{q-1}$ and
$c^{\rm coord*}_q(\frac{q^m-1}{q-1},m)=\left(\frac{q^m-1}{q-1}\right)!$ in the Theorem \ref{tight_values}. However, we will now give some reasoning to demonstrate the validity of the bound
(\ref{eq:asymp_lower_bound}), since this reasoning will allow us to feel the meaning and importance of the fractal matrix, the definition of which we will give in the next section.

So, let us cut the hypercube ${\bf Z}_q^n$ along one of the coordinates into $q$
subcubes\footnote{If someone is confused by the fact that a single cut divides a hypercube into $q$ subcubes,
then one can imagine how after one cut of the spine of a book it breaks up into several blocks.}
of dimension $n-1$. This can be done in $n$ ways. We will cut each of the $q$ resulting subcubes in one of the $n-1$ ways along one of its remaining coordinates into subcubes of dimension $n-2$, and so on, until we get a partition into $q^m$ subcubes of dimension $n-m$ each. The process described above can be implemented in
$$
\prod\limits_{i=0}^{m-1}\prod\limits_{j=1}^{q^i}(n-i)\sim n^{\frac{q^m-1}{q-1}}
$$
ways.

Note that some of these methods will lead to cuts of different subcubes along one coordinate. It can be proved that the proportion of partitions with such repetitions will be asymptotically small. To do this, we will prohibit the partitioning of two different subcubes along one coordinate. All the same, we have a finite number of factors (because $q$ and $m$ are finite), and $n$ tends to infinity.

With such a restriction, the partitions will obviously be different, and their number is equal to
$$
\prod\limits_{i=1}^{\frac{q^m-1}{q-1}} (n-i+1)\sim n^{\frac{q^m-1}{q-1}},
$$
i.e., asymptotically, despite the restrictions, it will be the same.

Therefore, the validity of the bound (\ref{eq:asymp_lower_bound}) is established.

Note that not all partitions can be obtained using the procedures described above.
For example, in the partition given by the star matrix given in Rivest's paper \cite{Riv1974},
cutting the original hypercube along any coordinate will lead to cutting some subcubes of the partition, because each column contains stars (see~Fig.~\ref{fig:star_matrix_of_Rivest}). Therefore, the question of an upper bound for the quantity
$c^{\rm coord}_q(n,m)$ requires additional research, which we will conduct in this paper.

\begin{figure}[ht]
\centering
$$
\left(\begin{array}{cccc}
0&0&*&0\cr
1&0&0&*\cr
*&1&0&0\cr
1&*&1&0\cr
1&1&*&1\cr
0&1&1&*\cr
*&0&1&1\cr
0&*&0&1
\end{array}\right)
$$
\caption{Star matrix with stars in all columns.}
\label{fig:star_matrix_of_Rivest}
\end{figure}

\section{Fractal star matrices and their properties}

We introduce matrices $M_{q,m}$ recursively as follows. The matrix $M_{q,0}$ has one row and zero columns\footnote{If a matrix with one row and zero columns seems casuistic to someone, then one can start with the matrix $M_{q,1}$, which has $q$ rows and one column and is a vertical column of all the numbers from ${\bf Z}_q$ written out in ascending order.},
the matrix $M_{q,m}$, $m=1,\dots,$ is defined through the matrix $M_{q,m-1}$, as shown in
Fig.~\ref{fig:recursive_construction_of_fractal_matrix},
\begin{figure}[ht]
\centering
$$
M_{q,m} =
\begin{tabular}{|c|c|c|c|c|}
    \hline
    $0$ & \multirow{3}{*}{\large $M_{q,m-1}$}&
        \multirow{3}{*}{\large $\scalebox{2}{$*$}_{q,m-1}$}&
        \multirow{3}{*}{\large $\cdots$}&
        \multirow{3}{*}{\large $\scalebox{2}{$*$}_{q,m-1}$}\\
    $\vdots$ &&&&\\
    $0$ &&&& \\ \hline
    $1$ & \multirow{3}{*}{\large $\scalebox{2}{$*$}_{q,m-1}$}&
        \multirow{3}{*}{\large $M_{q,m-1}$}&
        \multirow{3}{*}{\large $\cdots$}&
        \multirow{3}{*}{\large $\scalebox{2}{$*$}_{q,m-1}$}\\
    $\vdots$ &&&&\\
    $1$ &&&& \\ \hline
     \multirow{3}{*}{\large $\vdots$}&
     \multirow{3}{*}{\large $\vdots$}&
        \multirow{3}{*}{\large $\vdots$}&
        \multirow{3}{*}{\large $\ddots$}&
        \multirow{3}{*}{\large $\vdots$}\\
    &&&&\\
    &&&&\\ \hline
    $q{-}1$ & \multirow{3}{*}{\large $\scalebox{2}{$*$}_{q,m-1}$}&
        \multirow{3}{*}{\large $\scalebox{2}{$*$}_{q,m-1}$}&
        \multirow{3}{*}{\large $\cdots$}&
        \multirow{3}{*}{\large $M_{q,m-1}$}\\
    $\vdots$ &&&&\\
    $q{-}1$ &&&& \\ \hline
\end{tabular}
$$
\caption{Recursive construction of a fractal matrix.}
\label{fig:recursive_construction_of_fractal_matrix}
\end{figure}
where $\scalebox{2}{$*$}_{q,m-1}$ is a matrix of the same size as $M_{q,m-1}$, but consisting only of stars.

We will call the matrices $M_{q,m}$ {\it fractal} star matrices, or simply
{\it fractals\footnote{This name was chosen because the matrices $M_{q,m}$ have some self-similarity.}.}

Let us give examples of fractal star matrices (see ~Fig.\ref{fig:examples_of_fractal_matrices}):

\begin{figure}[ht]
\centering
$$
M_{2,2}=\left(\begin{array}{ccc}
0&0&*\cr
0&1&*\cr
1&*&0\cr
1&*&1
\end{array}\right), \qquad
M_{2,3}=\left(\begin{array}{ccccccc}
0&0&0&*&*&*&*\cr
0&0&1&*&*&*&*\cr
0&1&*&0&*&*&*\cr
0&1&*&1&*&*&*\cr
1&*&*&*&0&0&*\cr
1&*&*&*&0&1&*\cr
1&*&*&*&1&*&0\cr
1&*&*&*&1&*&1
\end{array}\right), \qquad
M_{3,2}=\left(\begin{array}{cccc}
0&0&*&*\cr
0&1&*&*\cr
0&2&*&*\cr
1&*&0&*\cr
1&*&1&*\cr
1&*&2&*\cr
2&*&*&0\cr
2&*&*&1\cr
2&*&*&2
\end{array}\right).
$$
\caption{Examples of fractal matrices.}
\label{fig:examples_of_fractal_matrices}
\end{figure}

Matrices obtained from fractals by permutation of rows and columns will also be called fractal.

\begin{remark}\label{remark_on_permut_of_rows}
It is clear that permutation of rows of a star matrix defines another ordered, but the same unordered partition into subcubes.
\end{remark}

\begin{remark}
One could define as fractal a star matrix obtained from fractal by the replacement
in some of its columns of all numeric symbols in accordance with some permutation $\pi$ from $S_q$,
but it is not necessary to do this, because such a matrix will automatically be fractal by virtue of the definitions already given, since it can be obtained by permutation of rows and columns
of the original star matrix. Indeed, if the permutation $\pi$ of the numeric symbols is applied to the leftmost
column of the matrix $M_{q,m}$ in Fig.~\ref{fig:recursive_construction_of_fractal_matrix}, then the same
result can be achieved by permutations $\pi^{-1}$ of the horizontal and vertical stripes\footnote{The vertical stripes are counted without taking into account the leftmost column.}.
\end{remark}

It is easy to check that the fractal star matrix has the properties that will be listed later in this section.

\begin{property}\label{number_of_rows_in_fractal}
The number of rows in the fractal star matrix $M_{q,m}$ is $q^m$.
\end{property}

\begin{proof}
Follows by induction from the recursive definition of a fractal.
\end{proof}

\begin{property}\label{number_of_columns_in_fractal}
The number of columns in the fractal star matrix $M_{q,m}$ is $\frac{q^m-1}{q-1}$.
\end{property}

\begin{proof}
Induction on $m$. For $m=0$, the number of columns is $\frac{q^0-1}{q-1}=0$. Let the statement be true for $m-1$. Then for the parameter $m$, the number of columns by the definition of the fractal matrix is
$1+q\cdot \frac{q^{m-1}-1}{q-1}=\frac{q^m-1}{q-1}$, which is what was required to be proved.
\end{proof}

\begin{property}
The fractal star matrix $M_{q,m}$ defines a partition of the hypercube ${\bf Z}_q^{(q^m-1)/(q-1)}$ into $q^m$
subcubes of dimension $\frac{q^m-1}{q-1}-m$ each.
\end{property}

\begin{proof}
Follows immediately by induction from the definition of the recursive construction of the matrix $M_{q,m}$.
\end{proof}

\begin{observation}
The fractal star matrix, after inserting columns of only stars into it up to a total of $n$ columns, turns into a partition star matrix obtained by the sequence of
cuts presented in the second part of Section \ref{sec:asymp_lower_bound} to prove the asymptotic lower bound (\ref{eq:asymp_lower_bound}).
\end{observation}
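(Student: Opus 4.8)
The plan is to prove this by induction on $m$, running the induction in parallel on the two recursive descriptions: the block recursion of the fractal in Figure~\ref{fig:recursive_construction_of_fractal_matrix} and the level-by-level cutting procedure of Section~\ref{sec:asymp_lower_bound}. The statement is really the assertion that these two constructions produce the same star matrix (up to permutations of rows and columns), so the work is to match them step by step rather than to prove anything deep.

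First I would dispose of the base case $m=0$. Here the fractal $M_{q,0}$ has a single row and no columns; after padding to $n$ columns it is a single all-star row, which is exactly the star pattern of the trivial partition of ${\bf Z}_q^n$ into one subcube, namely the whole cube obtained by performing zero cuts. For the inductive step I would identify the very first cut of the procedure with the leftmost column of $M_{q,m}$. That cut, along some coordinate $c_1$, splits ${\bf Z}_q^n$ into $q$ subcubes $S_0,\dots,S_{q-1}$, where $S_i$ has coordinate $c_1$ fixed to the value $i$; this is precisely the leftmost column of $M_{q,m}$, whose $i$th horizontal band carries the constant value $i$. The restriction forbidding two distinct subcubes to be cut along a common coordinate guarantees that $c_1$ and the coordinates later used to cut $S_0,\dots,S_{q-1}$ are pairwise distinct, which is exactly the feature of the fractal that the diagonal copies of $M_{q,m-1}$ in different bands sit over disjoint sets of columns.

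The central step is then to apply the induction hypothesis inside each $S_i$. Regarding $S_i$ as a copy of ${\bf Z}_q^{n-1}$ with coordinate $c_1$ deleted, the remaining cuts of the procedure partition it into $q^{m-1}$ subcubes of dimension $(n-1)-(m-1)=n-m$. By the inductive hypothesis this partition is given by a copy of $M_{q,m-1}$, placed on the coordinates actually used to cut $S_i$ and padded with stars on all the other coordinates. In the fractal, the ''active'' coordinates for band $i$ are precisely the columns of the $i$th vertical super-block, where the diagonal copy of $M_{q,m-1}$ resides, while every remaining column in band $i$ is a star, encoding a coordinate that stays free throughout the cutting of $S_i$. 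Assembling the $q$ bands reproduces the block layout of Figure~\ref{fig:recursive_construction_of_fractal_matrix} verbatim, and the leftover coordinates (used in no cut at all) become the all-star columns of the padding.

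The main obstacle I expect is purely the bookkeeping of the coordinate-to-column correspondence. One must check that the off-diagonal all-star blocks of the fractal coincide exactly with the coordinates that are consumed while cutting the \emph{other} bands and hence remain free in the current band, and that after relabelling coordinates so that the active ones occupy their fractal positions, the star-padding up to $n$ columns agrees on both sides. This is a matter of tracking the disjointness of cut-coordinates enforced by the restriction and invoking Remark~\ref{remark_on_permut_of_rows} together with the obvious invariance of the partition under permutation of columns, so no genuinely hard argument is involved once the induction is set up correctly.
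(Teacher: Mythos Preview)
The paper does not actually supply a proof of this Observation; it is stated bare, as something the reader is expected to see immediately from the recursive definition of the fractal and the description of the cutting procedure. Your inductive argument is correct and is precisely the implicit reasoning that makes the statement an ``observation'': the leftmost column of $M_{q,m}$ records the first cut, the $q$ diagonal copies of $M_{q,m-1}$ record the independent recursive cuttings of the $q$ resulting pieces, the off-diagonal star blocks encode the fact that a coordinate used to cut one piece remains free in all the others (which is exactly the no-repeated-coordinate restriction in the second part of Section~\ref{sec:asymp_lower_bound}), and the padding by all-star columns accounts for the coordinates never used in any cut. In short, your approach is the same as the paper's (unwritten) one, just made explicit.
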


\begin{property}\label{number_of_different_partitions_for_fractal}
The number of different unordered partitions defined by fractal matrices with parameters $q$ and $m$
is exactly $\left(\frac{q^m-1}{q-1}\right)!$.
\end{property}

\begin{proof}
We have already noted in Remark \ref{remark_on_permut_of_rows} that permuting rows does not yield a new
unordered partition. In Property \ref{number_of_columns_in_fractal} we have established that in
the fractal matrix with the specified parameters there are exactly $\frac{q^m-1}{q-1}$ columns. The fact that all permutations of columns yield different partitions of the hypercube can be understood by thinking about the process
of cutting the hypercube described in the second part of Section \ref{sec:asymp_lower_bound}, or we can
use the inductive procedure for constructing a fractal. Indeed, let the statement
be true for $m-1$, then for $m$ we have that the number of different partitions is
$$
\frac{q^m-1}{q-1}\cdot\binom{\frac{q^m-1}{q-1}-1}{\frac{q^{m-1}-1}{q-1},\dots,\frac{q^{m-1}-1}{q-1}}
\left(\left(\frac{q^{m-1}-1}{q-1}\right)!\right)^q=\left(\frac{q^m-1}{q-1}\right)!,
$$
which is what was required to be proved.
\end{proof}

We will say that the submatrix\footnote{The rows and columns of the submatrix in the star matrix $M$ do not have to be consecutive; in some subsequent figures, the rows and columns of the submatrix are shown in the star matrix $M$ as consecutive solely for ease of perception.}
$T$ of the star matrix $M$ is {\it a transfractal} if it is a
fractal\footnote{Naturally, we assume that the transfractal parameter $q$ is the same as that of the star matrix $M$ containing it.} and
all the columns that make up the submatrix $T$ outside the submatrix $T$ consist of only stars.

In the Figure \ref{fig:example_of_transfractal}, the transfractal is highlighted with a black frame.

\begin{figure}[ht]
\centering
$$
\begin{tikzpicture}[baseline=(current bounding box.center)]
\matrix (m) [matrix of nodes,nodes in empty cells,
             left delimiter=(,
             right delimiter=)] {
0&0&0&*&*&*\\
0&0&1&*&*&*\\
0&1&0&*&*&*\\
0&1&1&*&*&*\\
1&*&*&0&0&*\\
1&*&*&0&1&*\\
1&*&*&1&*&0\\
1&*&*&1&*&1\\
};
\draw[thick,red] (m-5-4.north west) rectangle (m-8-5.south west);
\draw[thick] (m-5-4.north west) rectangle (m-8-6.south east);
\end{tikzpicture}
$$
\caption{An example of a transfractal is highlighted with a black frame.}
\label{fig:example_of_transfractal}
\end{figure}

The number of rows in a transfractal will be called the {\it transfractal size.}
A column included in a transfractal, the size of which coincides with the size of the transfractal,
will be called the {\it leading column} of the transfractal. It is easy to see from the definition of a fractal that
any transfractal has exactly one leading column.
The rod of the leading column of a transfractal will also be called the {\it transfractal rod.}
In Fig.~\ref{fig:example_of_transfractal} the rod of the transfractal is highlighted in red.

The size of the transfractal will be called also as {\it the size of the transfractal rod}.

\begin{observation}
All numerical elements of the transfractal are covered in a non-intersecting manner
by the rod of the leading column and $q$ smaller transfractals.
\end{observation}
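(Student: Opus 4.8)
The plan is to read the statement directly off the recursive definition in Figure~\ref{fig:recursive_construction_of_fractal_matrix}. Since the asserted covering is invariant under permutations of rows and columns, and since by definition a transfractal is a fractal --- hence obtained from some $M_{q,m}$ by such permutations --- it suffices to verify the claim for the canonical matrix $M_{q,m}$, whose size is $q^m$ by Property~\ref{number_of_rows_in_fractal}. I would therefore fix this canonical layout: one leftmost column, which is the leading column, whose entries are the $q$ consecutive blocks $0,\dots,0,\,1,\dots,1,\,\dots,\,q{-}1,\dots,q{-}1$, and to the right of it $q$ vertical stripes $V_0,\dots,V_{q-1}$, where the block lying in horizontal stripe $i$ and vertical stripe $j$ equals $M_{q,m-1}$ when $i=j$ and the all-star matrix $\scalebox{2}{$*$}_{q,m-1}$ when $i\ne j$.

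The core of the argument is then a bookkeeping of where numerical entries can sit. The leftmost column is entirely numerical, and its rod is exactly the $q^m$ numbers of the leading column. Every other numerical entry lies in some stripe $V_j$, and within $V_j$ the only block that is not all-star is the diagonal block $M_{q,m-1}$ occupying horizontal stripe $j$; hence all numerical entries outside the leading column are accounted for by the $q$ diagonal copies of $M_{q,m-1}$. Because these copies occupy the pairwise distinct stripes $V_0,\dots,V_{q-1}$ while the rod occupies only the leading column, the rod and the $q$ copies sit in pairwise disjoint sets of cells, which is the claimed non-intersecting covering.

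The one point that deserves explicit checking --- and the only place where the definition of \emph{transfractal}, rather than merely that of fractal, is used --- is that each diagonal copy of $M_{q,m-1}$ is genuinely a transfractal of $M_{q,m}$. This requires that every column of stripe $V_j$, restricted to the rows lying outside horizontal stripe $j$, consist of stars only; but those restrictions are precisely the off-diagonal all-star blocks $\scalebox{2}{$*$}_{q,m-1}$ of the recursive picture, so the condition holds automatically. I expect no genuine obstacle: once the canonical form is set up, the statement is an immediate unpacking of the recursion, with the degenerate case $m=1$ --- where the $q$ smaller transfractals are the columnless matrices $M_{q,0}$, contributing no numerical entries, and the rod alone covers everything --- handled by the same reading.
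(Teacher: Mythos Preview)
Your proof is correct and takes the same approach as the paper, which simply writes ``By fractal construction''; you have just unpacked explicitly what that phrase means by reading off the block structure of Figure~\ref{fig:recursive_construction_of_fractal_matrix}. The only remark is that your level of detail, while entirely sound, is far beyond what the paper deems necessary for this observation.
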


\begin{proof}
By fractal construction.
\end{proof}

\begin{observation}
Each fractal column is a leading column of some transfractal.
\end{observation}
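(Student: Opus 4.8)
The plan is to argue by induction on $m$, reading off everything from the recursive construction of $M_{q,m}$ shown in Fig.~\ref{fig:recursive_construction_of_fractal_matrix}. For the base cases, $M_{q,0}$ has no columns (so there is nothing to prove), while $M_{q,1}$ has a single column, which is the leading column of the transfractal consisting of the whole matrix $M_{q,1}$. For the inductive step I would split the columns of $M_{q,m}$ into the leftmost column and the columns lying in the $q$ vertical stripes, and handle the two kinds separately.

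For the leftmost column, note that it carries a numerical value in each of the $q^m$ rows, so it is the leading column of the transfractal given by the entire matrix $M_{q,m}$: that matrix is a fractal, and the ``stars outside'' condition is vacuous since nothing lies outside it. For a column $c$ of the $i$-th vertical stripe, the construction places a copy of $M_{q,m-1}$ in the rows of the $i$-th horizontal stripe and stars in every other row; hence the numerical entries of $c$ are confined to the $i$-th horizontal stripe, where they reproduce the corresponding column of $M_{q,m-1}$. Applying the inductive hypothesis to this copy of $M_{q,m-1}$, the column $c$ is the leading column of some transfractal $T'$ sitting inside that copy. Since every column of $M_{q,m}$ is either the leftmost column or a stripe column, this exhausts all cases once $T'$ is shown to remain a transfractal in the ambient matrix.

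The only step requiring genuine care, which I expect to be the main point of the argument, is verifying that $T'$ is still a transfractal when viewed inside the full matrix $M_{q,m}$ rather than inside the sub-block $M_{q,m-1}$. The submatrix $T'$ is already a fractal, so it remains only to check that each of its columns consists of stars everywhere outside $T'$ within $M_{q,m}$. Within the $i$-th horizontal stripe this is exactly the inductive hypothesis for $T'$ inside the copy of $M_{q,m-1}$; outside the $i$-th horizontal stripe, every column of the $i$-th vertical stripe is all stars by the construction of $M_{q,m}$, and in particular every column of $T'$ is. Thus the ``stars outside'' property is preserved, $T'$ is a transfractal of $M_{q,m}$ with leading column $c$, and the induction closes.
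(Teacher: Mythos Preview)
Your proof is correct and follows essentially the same approach as the paper, which simply says ``By fractal construction.'' You have just unpacked that phrase into an explicit induction on $m$ along the recursive definition of $M_{q,m}$, verifying carefully that a transfractal found inside a diagonal block $M_{q,m-1}$ remains a transfractal in the ambient $M_{q,m}$; this is exactly the content the paper leaves implicit.
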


\begin{proof}
By fractal construction.
\end{proof}

\begin{lemma}\label{rows_of_transfractal}
Let the star matrix $M$ of the partition of the hypercube ${\bf Z}_q^n$ into subcubes of the same dimension contains a submatrix $T$ that is a transfractal.
Then all rows of the submatrix $T$ outside the submatrix $T$ are identical as rows.
\end{lemma}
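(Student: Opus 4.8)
The plan is to argue by induction on the size $q^{k}$ of the transfractal $T$ (equivalently, on its fractal parameter $k$, where $T\cong M_{q,k}$). The base case $k=0$ is vacuous, since $M_{q,0}$ has a single row. For the inductive step I would exploit the recursive construction of a fractal: writing $c_0$ for the leading column of $T$, the matrix $T$ splits into $c_0$ together with $q$ horizontal blocks, the $i$th block being the rows in which $c_0=i$, and within the $i$th block the only non-star transfractal columns form a copy $T_i\cong M_{q,k-1}$ of the smaller fractal. The first observation is that each $T_i$ is itself a transfractal of $M$: its columns are all-star outside its own rows, both inside $T$ (by the fractal construction) and outside $T$ (by the transfractal property of $T$). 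Hence the induction hypothesis applies to each $T_i$ and shows that all rows of the $i$th block coincide on every column outside $T_i$; in particular they coincide on all columns outside $T$, so the restriction of these rows to the columns outside $T$ is a single common pattern $P_i$.

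It then remains to prove that the block patterns $P_0,\dots,P_{q-1}$ are all equal, and this cross-block step is the crux of the argument. Here I would pass from star patterns to the actual subcubes and consider the set $U\subseteq{\bf Z}_q^n$ equal to the union of the subcubes indexed by the rows of $T$. The key point is that $U$ is invariant under changing the $c_0$-coordinate of a point to an arbitrary value. Indeed, every row of $M$ outside $T$ carries a star in column $c_0$ (again by the transfractal property), so its subcube is free in the coordinate $c_0$ and is therefore a union of complete classes of points differing only in $c_0$; consequently the union $V$ of all subcubes indexed by rows outside $T$ is invariant under changing $c_0$. Since $M$ is a partition we have ${\bf Z}_q^n=U\sqcup V$, and changing the $c_0$-coordinate is a bijection of ${\bf Z}_q^n$ preserving $V$, hence it preserves the complement $U$ as well.

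Finally I would read off the conclusion from this invariance. Using the within-block agreement already obtained, one checks that the part of $U$ with $c_0=i$ is exactly the set of points whose coordinates outside $T$ match $P_i$ and whose remaining transfractal coordinates are arbitrary; this is because the rows of the $i$th block, restricted to the transfractal columns other than $c_0$, already tile the corresponding coordinate space (as $T_i\cong M_{q,k-1}$ is itself a partition). The $c_0$-invariance of $U$ then forces the fibre over $c_0=i$ to be independent of $i$, which is precisely the equality $P_0=\dots=P_{q-1}$. Combined with the within-block agreement, this yields that all rows of $T$ coincide outside $T$, as required. I expect the main obstacle to be this cross-block step, and within it the clean justification that $U$ (and not merely $V$) is $c_0$-invariant and that its slice over $c_0=i$ is genuinely a full coordinate subcube in the transfractal directions; the remaining bookkeeping is routine once the fractal decomposition is in place.
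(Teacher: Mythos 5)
Your proof is correct, and although it shares the paper's induction skeleton --- induct on the transfractal size, split $T$ by the value of its leading column $c_0$ into $q$ blocks each containing a smaller transfractal of $M$, and apply the induction hypothesis within each block --- the crux, namely forcing the block patterns $P_0,\dots,P_{q-1}$ to coincide, is handled by a genuinely different argument. The paper stays inside the star matrix and argues by counting: for each column $\vec{s}$ external to $T$ it applies Lemma~\ref{equality_for_sums by_modulo_in_two_columns} to the pair formed by the leading column and $\vec{s}$, first collapsing the $q^{l-1}$ identical rows of each block to a single row, and then ruling out two distinct values of $\vec{s}$ on the rows of $T$ by choosing $\pi\in S_q$ so that two of the sums modulo $q$ would collide. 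You instead argue geometrically on the partition itself: the union $V$ of the cells indexed by rows outside $T$ is invariant under permuting the values in coordinate $c_0$ (every such row has a star at $c_0$, by the transfractal property), hence so is its complement $U$; and since the slice of $U$ over $c_0=i$ is exactly the cube ``agree with $P_i$ outside $T$, arbitrary on the remaining columns of $T$'' --- by the within-block agreement plus the fact that a fractal matrix tiles the hypercube on its own columns --- this invariance forces all the $P_i$ to be equal. Your route buys two things: it bypasses Lemmas~\ref{equality_in_single_column} and~\ref{equality_for_sums by_modulo_in_two_columns} entirely, and, since nothing in your argument appeals to the cells having equal size, it never uses the same-dimension hypothesis, so it actually proves the lemma for an arbitrary partition into subcubes (of possibly different dimensions) containing a transfractal. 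The paper's route, in exchange, reuses counting machinery it develops anyway and keeps every step local to a pair of columns of the matrix. Two small repairs would make your write-up airtight: ``changing the $c_0$-coordinate to an arbitrary value'' is not a bijection, so the invariance of $V$, and hence of $U$, should be stated via the bijections of ${\bf Z}_q^n$ that permute the values in coordinate $c_0$ and fix all other coordinates; and the degenerate base case $k=0$ (one row, zero columns, vacuously a transfractal) should either be defended explicitly or avoided by starting the induction at $k=1$, where your inductive-step argument applies verbatim with empty blocks $T_i$.
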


\begin{proof}
Proof by induction on the size of the transfractal. By the Property \ref{number_of_rows_in_fractal} of a fractal, the size of a transfractal is a power of $q$.

Let the size of the transfractal $T$ be $q$. Then by the Lemma \ref{equality_in_single_column}, the leading column $\vec{t}$ of the transfractal $T$ contains each number from ${\bf Z}_q$ exactly once. Consider a column $\vec{s}$ lying outside the transfractal $T$. By Lemma
\ref{equality_for_sums by_modulo_in_two_columns} the overlap size of columns $\vec{s}$ and $\vec{t}$ must be divisible by $q$. Hence, it is either $0$ or $q$. If the overlap size is $0$, then the rows of the submatrix $T$ in the column $\vec{s}$ contain only stars, and, therefore, all such rows coincide in the column $\vec{s}$. Let the overlap size of the columns $\vec{s}$ and $\vec{t}$ be $q$, i.e. the rows of the submatrix $T$ in the column $\vec{s}$ contain only numbers. Suppose that in this overlap in the column $\vec{s}$ there are two different numerical values:
$m_{r_1,s}\ne m_{r_2,s}$. We have already stated that $m_{r_1,t}\ne m_{r_2,t}$, so we can choose a permutation $\pi\in S_q$ such that $(m_{r_1,t}+\pi(m_{r_1,s}))\pmod{q}=(m_{r_2,t}+\pi(m_{r_2,s}))\pmod{q}$,
for example, by setting $\pi(m_{r_1,s})=m_{r_2,t}$, $\pi(m_{r_2,s})=m_{r_1,t}$.
However, this immediately leads to a contradiction with the statement of the Lemma
\ref{equality_for_sums by_modulo_in_two_columns}. Therefore, the assumption of two different numerical values in the column $\vec{s}$ in its overlap with the column $\vec{t}$ turned out to be incorrect. Therefore, all rows of the transfractal $T$ coincide in the column $\vec{s}$. Any column outside the transfractal $T$ could have been taken as a column $\vec{s}$, therefore the statement of the lemma is proved for the transfractal $T$ with the number of rows $q$, which is the basis of induction.

Let us now prove the inductive step. Let the statement of the lemma be true for transfractals of size $q^{l-1}$. Let us consider a transfractal with $q^l$ rows. By the fractal structure (Property \ref{number_of_rows_in_fractal}) and Lemma \ref{equality_in_single_column}, the leading column $\vec{t}$ of the transfractal contains each value from ${\bf Z}_q$ exactly $q^{l-1}$ times. Thus, the transfractal $T$ is divided into $q$ stripes with the same value in the column $\vec{t}$ within each stripe. By the transfractal structure, each of the stripes contains a leading column of size $q^{l-1}$ of a smaller transfractal. Hence, by the inductive hypothesis, for each of the $q$ stripes, all rows of this stripe coincide outside the transfractal $T$. Let us consider an arbitrary column $\vec{s}$ lying outside the transfractal $T$. From the above, within each of the $q$ stripes in the column $\vec{s}$ there is the same element (a number or a star). Hence, each pair of elements $(m_{r,t},m_{r,s})$ in the $q^l$ rows of the transfractal $T$ is duplicated $q^{l-1}$ times. Let us use the Lemma \ref{equality_for_sums by_modulo_in_two_columns}. Let us group the $q^{l-1}$ matching values in the columns $\vec{t}$ and $\vec{s}$ in each stripe into a single value. This will not violate the result of the analysis of the assertion of the Lemma
\ref{equality_for_sums by_modulo_in_two_columns} with respect to the number of matching values of sums modulo $q$. Thus, we have moved on to two columns of height $q$ each, for which we can repeat the arguments from the proof of the induction base of the lemma we are proving. We thus show that all rows of the transfractal $T$  coincide in the column $\vec{s}$. Since the choice of the column $\vec{s}$ outside the transfractal $T$ was arbitrary, we have proved the inductive step, and with it the entire lemma.
\end{proof}

\begin{corollary}\label{corollary_rows_of_transfractal}
Let $\vec{t}$ be a column included in the transfractal $T$, and $\vec{s}$ be a column external to $T$. Then the columns $\vec{t}$ and $\vec{s}$ either do not overlap, or the column $\vec{t}$ is inserted into the column $\vec{s}$, and in the latter case each number in the column $\vec{t}$ lies in the same row with the same number $a$ in the column $\vec{s}$.
\end{corollary}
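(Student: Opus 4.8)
The plan is to deduce this corollary from Lemma \ref{rows_of_transfractal} by applying that lemma not to $T$ itself but to a smaller transfractal sitting inside $T$ whose leading column is exactly $\vec{t}$. The guiding idea is that the rows in which $\vec{t}$ carries numbers (its rod) should coincide with the rows of such a sub-transfractal; once that is known, the desired constancy of $\vec{s}$ on those rows follows immediately from the lemma, and the two alternatives in the statement correspond to the common value being a star or a number.

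First I would invoke the observation that every column of a fractal is the leading column of some transfractal: applied inside $T$, this produces a sub-block $T'$ of $T$, a fractal whose leading column is $\vec{t}$. Granting (as discussed below) that $T'$ is genuinely a transfractal of $M$, the definition of the leading column shows that $\vec{t}$ carries a number in precisely the rows of $T'$ and a star elsewhere, so the rod of $\vec{t}$ is exactly the row set of $T'$. Since $\vec{s}$ is external to $T$ and the columns of $T'$ form a subset of the columns of $T$, the column $\vec{s}$ is external to $T'$ as well. I would then apply Lemma \ref{rows_of_transfractal} to $T'$ and conclude that all rows of $T'$ coincide outside $T'$; in particular they agree in the single external column $\vec{s}$, so $\vec{s}$ takes one and the same value on every row of the rod of $\vec{t}$.

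The conclusion then splits into two cases according to that common value. If it is a star, then $\vec{s}$ is a star in every row where $\vec{t}$ is a number, so the overlap of $\vec{t}$ and $\vec{s}$ is empty and the columns do not overlap. If it is a number $a$, then $\vec{s}$ equals $a$ in every row of the rod of $\vec{t}$; hence every numerical entry of $\vec{t}$ shares its row with the value $a$ in $\vec{s}$, the overlap size equals the size of $\vec{t}$, and $\vec{t}$ is inserted into $\vec{s}$ with each of its numbers aligned to $a$, which is exactly the asserted alternative.

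The step I expect to require the most care is justifying that $T'$ really is a transfractal of $M$ in the sense of the definition, namely that each column of $T'$ consists only of stars everywhere outside the rows of $T'$, and not merely outside the rows of $T$. This is where the recursive fractal construction is used: within $T$, the sub-block carrying $\vec{t}$ as its leading column has all-star blocks to its sides in the remaining horizontal stripes, so the columns of $T'$ are already all stars in the rows of $T$ lying outside $T'$; combined with the hypothesis that the columns of $T$ are all stars outside $T$, this yields the required all-star condition for $T'$ and legitimizes the application of Lemma \ref{rows_of_transfractal}.
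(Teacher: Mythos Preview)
Your argument is correct and follows the route the paper clearly intends: the corollary is immediate from Lemma~\ref{rows_of_transfractal}, which forces the external column $\vec{s}$ to be constant across the relevant rows, after which the dichotomy ``star versus a fixed number $a$'' gives the two alternatives.

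One remark: passing to the inner transfractal $T'$ with leading column $\vec{t}$ is sound (and your justification that $T'$ is a transfractal of $M$ is correct), but it is not needed. You can apply Lemma~\ref{rows_of_transfractal} directly to $T$ itself: since $\vec{s}$ is external to $T$, all rows of $T$ agree in column $\vec{s}$; and since $\vec{t}$ is a column of the transfractal $T$, the rod of $\vec{t}$ lies inside the rows of $T$, so the same constant value of $\vec{s}$ already sits opposite every number of $\vec{t}$. This bypasses the verification that $T'$ is a transfractal of $M$ and yields the same case split.
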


\section{Operation of the bang of a star matrix}

We introduce the operation of the bang of a star matrix. We define the operation only for star matrices of A-primitive partitions into subcubes of the same dimension.

Let $M$ be a star matrix of an A-primitive partition into subcubes of the same dimension.
We select a column $\vec{i}$ and a value $a$ from ${\bf Z}_q$. We perform the following actions.

\begin{itemize}
\item{ 1. Delete the column $\vec{i}$.}

\item{ 2. Delete all rows in which the column $\vec{i}$ contained a numerical value different from $a$.}

\item{ 3. Duplicate each of the rows in which the column $\vec{i}$ contained the numerical value $a$ to $q$ identical rows.}

\item{ 4. For each of the resulting stripes of $q$ identical rows, add to the matrix a column in which outside the stripe there are only stars, and within the stripe there is each numerical value once.}

\item{ 5. If the matrix contains columns of only stars, then delete them.}
\end{itemize}

The described sequence of actions defines {\it the operation of the bang}\footnote{The term ''bang'' seems to us to be accurate for the process described. Indeed, during the bang, some columns and rows are destroyed, another part of the rows is split, and pieces of their contents fly away.}
of the star matrix $M$. Sometimes, for convenience, instead of the matrix bang, we will speak of the bang of the column $\vec{i}$, implying the same actions.

If the $i$th column contained $kq$ numbers, then when it banged, $k(q-1)$ rows were deleted from the matrix, but the same number were added due to a duplication, so the total number of rows remained the same. In the rows with stars in the $i$th column, numbers were neither deleted nor appeared, and in the rows obtained from the row with the number $a$ in the $i$th column, this number $a$ disappeared, but a number was added in the newly assigned column. Thus, the total number of numbers in all rows remained the same and the same as in the matrix $M$. It is easy to see that any two rows of the resulting matrix will contain different numbers in at least one column --- problems along the way arose only with duplicate rows, but they received different numerical values in the newly assigned column. Finally, due to the last operation, the matrix will not have columns of only stars. Thus, the bang results in an A-primitive star matrix, which is a partition matrix into the same number of subcubes of the same dimension as the original matrix $M$. However, the number of columns in the matrix may change.

Let us describe informally how a partition is transformed during a bang. First, the $i$th coordinate is actually removed from the space. Only those vectors remain that had the numerical value $a$ in the $i$th component. In this case, if the star pattern of a subcube contained a star in the $i$th component, then the dimension of the subcube decreases by $1$; subcubes with a numerical value in the $i$th component different from $a$ disappear completely; subcubes with a numerical value in the $i$th component equal to $a$ remain completely. Thus, the partition consists of subcubes of different dimensions, which may differ by $1$. However, adding new columns equalizes the dimensions again --- only stars are added to the star patterns of smaller subcubes, and each such star increases the dimension by $1$, and a number is added once to the star patterns of larger subcubes --- and due to this the dimensions are equalized.

Let us give an example of the bang of a star matrix. The bang process is shown in Fig.~\ref{fig:example_bang}.

\begin{figure}[ht]
\centering
$$
\scalebox{1.5}{
\def\0{\ccolor{$0$}}
\def\2{\ccolor{$*$}}
\def\1{\ccolor{$1$}}
\def\ARR{+(0.5,0.05) [->]edge +(1.5,0.4) +(0.5,-0.05) -- +(1.5,-0.4)}
\begin{tikzpicture}
\draw (0,7) \0\0\2\0 ;
\draw (0,6) \1\0\0\2 ;
\draw (0,5) \2\1\0\0 ;
\draw (0,4) \1\2\1\0 ;
\draw (0,3) \1\1\2\1 ;
\draw (0,2) \0\1\1\2 ;
\draw (0,1) \2\0\1\1 ;
\draw (0,0) \0\2\0\1 ;
\end{tikzpicture}
\raisebox{3.5em}{\ \ $\Longrightarrow\ \ $}
\begin{tikzpicture}
\draw (0,7) \0\0\2\0 \ARR;
\draw (0,6) \1\0\0\2 ;
\draw (0,5) \2\1\0\0 ;
\draw (0,4) \1\2\1\0 ;
\draw (0,3) \1\1\2\1 ;
\draw (0,2) \0\1\1\2 \ARR;
\draw (0,1) \2\0\1\1 ;
\draw (0,0) \0\2\0\1 \ARR;
\draw (1,-0.5)--(1,7.5);
\draw (0.5,6)--(4.5,6);
\draw (0.5,4)--(4.5,4);
\draw (0.5,3)--(4.5,3);
\end{tikzpicture}
\raisebox{3.5em}{\ \ $\Longrightarrow\ \ $}
\begin{tikzpicture}
\draw (0,7) \0\2\0\0\2\2 ;
\draw (0,6) \0\2\0\1\2\2 ;
\draw (0,5) \1\0\0\2\2\2 ;
\draw (0,4) \1\1\2\2\0\2 ;
\draw (0,3) \1\1\2\2\1\2 ;
\draw (0,2) \0\1\1\2\2\2 ;
\draw (0,1) \2\0\1\2\2\0 ;
\draw (0,0) \2\0\1\2\2\1 ;
\end{tikzpicture} }
$$
\caption{An example of a bang of a star matrix.}
\label{fig:example_bang}
\end{figure}

As an example, we took the matrix from Rivest's paper \cite{Riv1974}. Here $q=2$.
We bang the left column with the value $0$.
The banged column is removed from the matrix; the $2$nd, $4$th and $5$th rows are also removed, as having numerical values in the exploded column different from $0$; at the same time, the $1$st, $6$th and $8$th rows, as having $0$ in the banged column, are split into two each, after which for each pair of cloned rows
we assign our own column to the right, in which the rows of this pair contain values from ${\bf Z}_2$ (i.e. $0$ and $1$) once, and stars in the remaining rows. Thus, a total of three new columns are assigned.
The bang did not produce columns of stars only, so there is no need to delete anything else.

We will bang up the resulting star matrix again.
The process of repeated bang is shown in Fig.~\ref{fig:example_bang_2}.

\begin{figure}[ht]
\centering
$$
\scalebox{1.15}{
\def\0{\ccolor{$0$}}
\def\2{\ccolor{$*$}}
\def\1{\ccolor{$1$}}
\def\ARR{+(0.5,0.05) [->]edge +(1.5,0.4) +(0.5,-0.05) -- +(1.5,-0.4)}
\begin{tikzpicture}
\draw (0,7) \0\2\0\0\2\2 ;
\draw (0,6) \0\2\0\1\2\2 ;
\draw (0,5) \1\0\0\2\2\2 ;
\draw (0,4) \1\1\2\2\0\2 ;
\draw (0,3) \1\1\2\2\1\2 ;
\draw (0,2) \0\1\1\2\2\2 ;
\draw (0,1) \2\0\1\2\2\0 ;
\draw (0,0) \2\0\1\2\2\1 ;
\end{tikzpicture}
\raisebox{3.5em}{\ \ $\Longrightarrow\ \ $}
\begin{tikzpicture}
\draw (0,7) \0\2\0\0\2\2 \ARR;
\draw (0,6) \0\2\0\1\2\2 \ARR;
\draw (0,5) \1\0\0\2\2\2 ;
\draw (0,4) \1\1\2\2\0\2 ;
\draw (0,3) \1\1\2\2\1\2 ;
\draw (0,2) \0\1\1\2\2\2 \ARR;
\draw (0,1) \2\0\1\2\2\0 ;
\draw (0,0) \2\0\1\2\2\1 ;
\draw (1,-0.5)--(1,7.5);
\draw (0.5,5)--(6.5,5);
\draw (0.5,4)--(6.5,4);
\draw (0.5,3)--(6.5,3);
\end{tikzpicture}
\raisebox{3.5em}{\ \ $\Longrightarrow\ \ $}
\begin{tikzpicture}
\draw (0,7) \2\0\0\2\2\0\2\2 ;
\draw (0,6) \2\0\0\2\2\1\2\2 ;
\draw (0,5) \2\0\1\2\2\2\0\2 ;
\draw (0,4) \2\0\1\2\2\2\1\2 ;
\draw (0,3) \1\1\2\2\2\2\2\0 ;
\draw (0,2) \1\1\2\2\2\2\2\1 ;
\draw (0,1) \0\1\2\2\0\2\2\2 ;
\draw (0,0) \0\1\2\2\1\2\2\2 ;
\draw (4,-0.5)--(4,7.5);
\end{tikzpicture}
\raisebox{3.5em}{\ \ $\Longrightarrow\ \ $}
\begin{tikzpicture}
\draw (0,7) \2\0\0\2\0\2\2 ;
\draw (0,6) \2\0\0\2\1\2\2 ;
\draw (0,5) \2\0\1\2\2\0\2 ;
\draw (0,4) \2\0\1\2\2\1\2 ;
\draw (0,3) \1\1\2\2\2\2\0 ;
\draw (0,2) \1\1\2\2\2\2\1 ;
\draw (0,1) \0\1\2\0\2\2\2 ;
\draw (0,0) \0\1\2\1\2\2\2 ;
\end{tikzpicture} }
$$
\caption{Repeated bang.}
\label{fig:example_bang_2}
\end{figure}

We bang the left column with the value $0$ again.
The banged column is removed from the matrix; the $3$rd, $4$th and $5$th rows are also removed, as having numerical values in the banged column different from $0$; at the same time, the $1$st, $2$nd and $6$th rows, as having $0$ in the banged column, are split into two, after which for each pair of cloned rows, we assign our own column to the right, in which the rows from this pair contain values from ${\bf Z}_2$ (i.e. $0$ and $1$) once, and stars in the remaining rows. Thus, a total of three new columns are assigned.
This results in a column of stars only --- we delete it from the matrix.

If you look closely, you will notice that the resulting $8\times 7$ matrix is fractal. As we will understand later, this is not accidental.

\section{Non-expandable star matrices}

We call a star matrix of an A-primitive partition into subcubes of the same dimension {\it expandable} if the number of columns in it increases under some bang. Accordingly, we call a star matrix of an A-primitive partition into subcubes of the same dimension {\it non-expandable} if the number of columns in it does not increase under any bang.

In this section, we will show that only fractal matrices are non-expandable.

\begin{lemma}\label{only_transfractls}
Let the A-primitive star matrix $M$ of the partition of the hypercube ${\bf Z}_q^n$ into subcubes of the same dimension be non-expandable. Then each of its columns has a size equal to a power of $q$, and the rod of this column is the rod of the leading column of some transfractal.
\end{lemma}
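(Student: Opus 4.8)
The plan is to extract from the definition of the bang an exact bookkeeping of how the number of columns changes, and then to feed this into a strong induction on the column size. Fix a column $\vec i$ of size $s$ and a value $a\in\mathbf Z_q$. By Lemma~\ref{equality_in_single_column} the value $a$ occupies exactly $k:=s/q$ rows of $\vec i$, so step~3 duplicates $k$ rows, step~4 appends $k$ new columns (none of them all-star, since each carries every value inside its stripe), and step~1 deletes $\vec i$. A surviving column $\vec s\neq\vec i$ becomes all-star --- and is removed in step~5 --- exactly when every numerical entry of $\vec s$ lies in a deleted row, i.e.\ in a row where $\vec i$ carries a value different from $a$; equivalently, $\vec s$ is inserted into $\vec i$ and the rod of $\vec s$ avoids the $a$-rows of $\vec i$. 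Writing $d_a$ for the number of such columns, the net change in the number of columns is $k-1-d_a$, so non-expandability forces \[ d_a\ge k-1\qquad\text{for every } a\in\mathbf Z_q. \] In particular $\vec i$ has at least $k-1$ columns inserted into it whose rods avoid the $a$-rows, and each of these has size at most $k(q-1)<s$.

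I would now argue by strong induction on $s$, which by Lemma~\ref{equality_in_single_column} is a multiple of $q$ and, by A-primitivity, at least $q$. For the base case $s=q$ the rod of $\vec i$ carries each element of $\mathbf Z_q$ exactly once, so $\vec i$ alone is a copy of $M_{q,1}$ and, having only stars off its rod, is a transfractal whose leading rod is the rod of $\vec i$. For the inductive step let $s=qk$ and assume the statement for all smaller sizes. Then every column smaller than $\vec i$ is the leading column of some transfractal $T$; since $\vec i$ is external to $T$, Corollary~\ref{corollary_rows_of_transfractal} shows that such a column is either disjoint from $\vec i$ or inserted into $\vec i$, and in the inserted case its whole rod lies in a single value-block of $\vec i$ (the set of rows on which $\vec i$ takes one fixed value). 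Partition the rod of $\vec i$ into the value-blocks $B_0,\dots,B_{q-1}$, each of size $k$ by Lemma~\ref{equality_in_single_column}; every smaller column meeting $\vec i$ is confined to one block, and the previous inequalities guarantee that each block contains enough such columns.

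The aim of the step is to produce inside every block $B_a$ a complete sub-transfractal of size $k$ whose columns are all-star outside $B_a$: assembling these $q$ sub-transfractals together with $\vec i$ realizes the rod of $\vec i$ as the leading rod of a transfractal of size $qk$, exactly as in the recursive picture of Figure~\ref{fig:recursive_construction_of_fractal_matrix}, and forces $k$ (hence $s$) to be a power of $q$. For this it suffices to show that the $k$ rows of $B_a$ are already separated by the columns whose rods lie inside $B_a$: by the inductive hypothesis these are transfractal rods confined to $B_a$, pairwise disjoint or nested (again by Corollary~\ref{corollary_rows_of_transfractal}), and once they separate all $k$ rows a count of their rods controlled by $d_a\ge k-1$ forces them to nest into a single fractal of size $k$. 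Here Lemma~\ref{lemma1} supplies, for any two distinct rows of $B_a$, some separating column; the task is to arrange that a separating column can always be chosen inside $B_a$.

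The main obstacle is precisely this last separation requirement: two rows of a block $B_a$ might a priori be told apart only by a column $\vec s$ larger than $\vec i$ that overlaps $\vec i$ partially. This difficulty disappears once one knows that the rods of any two columns are either disjoint or nested, for then a larger column overlapping $\vec i$ contains the entire rod of $\vec i$ and hence, by Lemma~\ref{rows_of_transfractal}, is constant on $B_a$ and cannot separate its rows. I therefore expect the heart of the proof to be establishing this laminarity of the column rods for non-expandable matrices, which I would again attack through the bang: a genuine partial overlap of two columns should, via the sum-equidistribution of Lemma~\ref{equality_for_sums by_modulo_in_two_columns}, prevent some bang from producing its quota of $k-1$ all-star columns, contradicting $d_a\ge k-1$. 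Once laminarity is secured, the separation claim, the assembly of the $q$ sub-transfractals, and with them the whole induction close, and every column rod is exhibited as a transfractal rod of power-of-$q$ size.
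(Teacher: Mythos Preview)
Your bookkeeping for the bang is right, and so is the base case. But the inductive step takes a detour that leaves a real gap. You yourself identify the obstacle: to run a separation argument inside a block $B_a$ you must rule out that two rows of $B_a$ are distinguished only by a column of size $\ge s$, and for such columns the inductive hypothesis tells you nothing. Your proposed cure, laminarity of all column rods, is essentially what the lemma is meant to deliver (once every column is a transfractal leading column, Corollary~\ref{corollary_rows_of_transfractal} yields laminarity), so appealing to it here is circular; the vague reference to Lemma~\ref{equality_for_sums by_modulo_in_two_columns} does not supply it. There is also a direction slip: $d_a$ counts columns whose rods \emph{avoid} $B_a$, so it does not directly bound the family of columns sitting inside $B_a$.

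The paper avoids the whole issue by replacing your separation argument with a pure counting argument that only ever touches columns \emph{strictly smaller} than $\vec i$ --- exactly the ones the induction controls. Bang $\vec i$ with value~$0$. Every column that becomes all-star has its rod inside a single stripe $R_i$, $i\neq 0$; by the inductive hypothesis each such column is a transfractal leading column, so by Corollary~\ref{corollary_rows_of_transfractal} the insertion-maximal rods in $R_i$ are pairwise disjoint, say of sizes $l_{i,1},\dots,l_{i,p_i}$ with $\sum_j l_{i,j}\le k$, and every vanishing column with rod in $R_i$ is a column of one of these maximal transfractals. The key quantitative fact you are missing is Property~\ref{number_of_columns_in_fractal}: a transfractal of size $l$ has exactly $\frac{l-1}{q-1}$ columns. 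Hence the number of removed columns satisfies
\[
N \;=\; 1+\sum_{i=1}^{q-1}\sum_{j=1}^{p_i}\frac{l_{i,j}-1}{q-1}
\;=\; 1+\sum_{i=1}^{q-1}\frac{\bigl(\sum_j l_{i,j}\bigr)-p_i}{q-1}
\;\le\; 1+(q-1)\cdot\frac{k-1}{q-1}\;=\;k.
\]
Non-expandability forces $N\ge k$, so equality holds throughout: each $p_i=1$ and $l_{i,1}=k$, i.e.\ every stripe $R_i$ with $i\neq 0$ is filled by a single transfractal of size $k$. Banging with value~$1$ gives the same for $R_0$. Those $q$ transfractals together with $\vec i$ exhibit $\vec i$ as a leading column, and force $k$ (hence $s$) to be a power of $q$. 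No column of size $\ge s$ is ever consulted.
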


\begin{proof}
By Lemma \ref{equality_in_single_column} in any column of $M$ each numerical value occurs the same quantity of times, so the size of any column has the form $kq$. We will prove lemma by induction on $k$. For $k=1$, the rod of the column is a transfractal itself, that provides the basis for the induction.
Let us prove the inductive step. Consider a column $\vec{s}$ of size $kq$.
By the inductive hypothesis, the assertion of Lemma \ref{only_transfractls} is satisfied for all columns of smaller size; we prove it for the column $\vec{s}$.

We bang the column $\vec{s}$ with the value $0$. By definition of the bang, the column $\vec{s}$ will be removed from the matrix and $k$ new columns will be added by splitting the rows of the stripe $R_0$, consisting of all the rows of the matrix $M$ that had the value $0$ in the column $\vec{s}$. Also, it is possible that some number of columns will be removed, which will consist of only stars. Let us estimate the number of such columns (see~Fig.~\ref{fig:lemma_proof}).

\begin{figure}[ht]
\centering
\begin{tikzpicture}[scale=2.8,
    stripe/.style={minimum height=2cm, minimum width=12cm},
    brace/.style={decoration={brace,mirror,raise=5pt}, decorate},
    arrow/.style={->, thick},
    box/.style={draw, minimum width=0.2cm, minimum height=0.5cm}
]

\node at (-7,5) {$R_0$};
\node at (-6.2,5) {$k$};
\draw[brace] (-5.7,6) -- (-5.7,4);

\node at (-7,3) {$R_1$};
\node at (-6.2,3) {$k$};
\draw[brace] (-5.7,4) -- (-5.7,2);

\node at (-7,-3) {$R_{q-1}$};
\node at (-6.2,-3) {$k$};
\draw[brace] (-5.7,-2) -- (-5.7,-4);

\node at (-5,7) {$\vec{s}$};
\node at (-5,6.7) {$*$};
\node at (-5,6.4) {$*$};
\node at (-5,6.1) {$*$};

\node at (-3,6.7) {$*$};
\node at (-3,6.4) {$*$};
\node at (-3,6.1) {$*$};

\node at (-2,6.7) {$*$};
\node at (-2,6.4) {$*$};
\node at (-2,6.1) {$*$};

\node at (2,6.7) {$*$};
\node at (2,6.4) {$*$};
\node at (2,6.1) {$*$};

\node at (3,6.7) {$*$};
\node at (3,6.4) {$*$};
\node at (3,6.1) {$*$};

\draw (-5.8,6) -- (6,6);  
\draw (-5.8,4) -- (6,4);  
\draw (-5.8,2) -- (6,2);  
\draw (-5.8,-2) -- (6,-2);  
\draw (-5.8,-4) -- (6,-4);  

\node at (-5,5.5) {$0$};
\node at (-5,5) {$\vdots$};
\node at (-5,4.5) {$0$};

\node at (-3,5.5) {$*$};
\node at (-3,5) {$\vdots$};
\node at (-3,4.5) {$*$};

\node at (-2,5.5) {$*$};
\node at (-2,5) {$\vdots$};
\node at (-2,4.5) {$*$};

\node at (2,5.5) {$*$};
\node at (2,5) {$\vdots$};
\node at (2,4.5) {$*$};

\node at (3,5.5) {$*$};
\node at (3,5) {$\vdots$};
\node at (3,4.5) {$*$};

\node at (-5,3.5) {$1$};
\node at (-5,3) {$\vdots$};
\node at (-5,2.5) {$1$};

\node at (2,3.5) {$*$};
\node at (2,3) {$\vdots$};
\node at (2,2.5) {$*$};

\node at (3,3.5) {$*$};
\node at (3,3) {$\vdots$};
\node at (3,2.5) {$*$};

\node at (-5,0) {$\vdots$};
\node at (-2,0) {$\vdots$};
\node at (-3,0) {$\vdots$};
\node at (2,0) {$\vdots$};
\node at (3,0) {$\vdots$};

\node at (-5,-2.5) {$q-1$};
\node at (-5,-3) {$\vdots$};
\node at (-5,-3.5) {$q-1$};

\node at (-3,-2.5) {$*$};
\node at (-3,-3) {$\vdots$};
\node at (-3,-3.5) {$*$};

\node at (-2,-2.5) {$*$};
\node at (-2,-3) {$\vdots$};
\node at (-2,-3.5) {$*$};


\node[box] at (-3,3.6) {};
\node at (-2.5,3.6) {$l_{1,1}$};

\node[rotate=10] at (-2.5,3.1) {$\ddots$};
\node[box] at (-2,2.4) {};
\node at (-1.4,2.4) {$l_{1,p_1}$};

\node at (-2.0,3.8) {$*$};  
\node at (-2.0,3.4) {$*$};  

\node at (-3.0,2.6) {$*$};  
\node at (-3.0,2.2) {$*$};  


\node[rotate=10] at (2.5,-2.9) {$\ddots$};

\node[box] at (2,-2.4) {};
\node at (2.7,-2.32) {$l_{q-1,1}$};
\node at (3.,-2.2) {$*$};  
\node at (3.,-2.6) {$*$};  

\node[box] at (3,-3.6) {};
\node at (4.0,-3.6) {$l_{q-1,p_{q-1}}$};
\node at (2.,-3.4) {$*$};  
\node at (2.,-3.8) {$*$};  

\node[rotate=45] at (0,0) {$\vdots$};

\draw[arrow] (-5,-4.3) -- (-5,-4);
\node at (-5,-4.6) {bang 0};

\end{tikzpicture}
\caption{Illustration for the proof of Lemma \ref{only_transfractls}.}
\label{fig:lemma_proof}
\end{figure}

Let all numbers in the column $\vec{t}$ of the matrix $M$ disappear during the bang. From the definition of a bang it is easy to see that the column $\vec{t}$ must be inserted into the column $\vec{s}$, and rows with numbers of the column $\vec{t}$ cannot have different numbers in the column $\vec{s}$ by  Corollary \ref{corollary_rows_of_transfractal}, since by the inductive hypothesis the column $\vec{t}$ is the leading column of some transfractal, and the column $\vec{s}$ is external with respect to this transfractal. Therefore, the rod of the column $\vec{t}$ must be in one of $q-1$ stripes $R_i$ of size $k$, where the $i$th stripe consists of all $k$ rows of the matrix $M$ that have the value $i$ in the column $\vec{s}$, $i=1,\dots,k-1$.
Let us consider the stripe $R_i$ and among all the rods of columns lying in it, let us choose the maximal by inclusion set $T_i$ of such rods of columns that are not inserted into other such rods. According to the selection condition, rods from $T_i$ are rods of leading columns of transfractals that are pairwise not inserted in each other. Therefore, by Corollary \ref{corollary_rows_of_transfractal}, rods from $T_i$ do not overlap pairwise. Let their sizes be $l_{i,1}, \dots, l_{i,p_i}$. From the pairwise non-overlapping it follows that
\begin{equation}\label{eq:inequality_for_stripe}
\sum\limits_{j=1}^{p_i}l_{i,j}\le k.
\end{equation}
By the inductive hypothesis, all rods from $T_i$ have a size equal to a power of $q$ and are rods of leading columns of some transfractals, all columns of which will also disappear under the bang. We know the number of columns in a transfractal from the definitions of a transfractal, a fractal, and the Property
\ref{number_of_columns_in_fractal}. Therefore, the quantity $N$ of columns that will be removed from the matrix during the bang can be estimated, given (\ref{eq:inequality_for_stripe}), as
\begin{equation}\label{eq:estimation_of cancelled_allstars_columns}
N=1+\sum\limits_{i=1}^{q-1}\sum\limits_{j=1}^{p_i}\frac{q^{\log_q l_{i,j}}-1}{q-1}=
1+\sum\limits_{i=1}^{q-1}\frac{\left(\sum\limits_{j=1}^{p_i}l_{i,j}\right)-p_i}{q-1}\le
1+\sum\limits_{i=1}^{q-1}\frac{k-1}{q-1}=k,
\end{equation}
moreover, from the inequality in (\ref{eq:estimation_of cancelled_allstars_columns}) it is easy to see that the equality
$N=k$ can be achieved only if all $p_i=1$ and all $l_{i,1}=k$, $i=1,\dots,q-1$, i.e. when all sets $T_i$ consist of a single rod, the size of which is equal to the number of rows in the stripe $R_i$.
Carrying out a bang of the column $\vec{s}$ with the value $1$, we obtain the same requirement for the stripe $R_0$.

It follows that the number of columns in the matrix $M$ will not increase under some bang of the column $\vec{s}$ only if the column $\vec{s}$ is the leading column of the transfractal and its size $kq$ is a power of the number $q$.
This completes the proof of the inductive step.
\end{proof}

\begin{corollary}\label{non_fractals_are_expandable}
Let $M$ be a non-expandable matrix. Then $M$ is a fractal matrix.
\end{corollary}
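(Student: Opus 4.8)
The plan is to treat Lemma \ref{only_transfractls} as the main engine and reduce the corollary to a single missing ingredient: that a non-expandable A-primitive matrix $M$ must contain a column of full size $q^m$, where $q^m$ is its number of rows. Once such a column is produced, the conclusion follows quickly. By Lemma \ref{only_transfractls} that column is the leading column of a transfractal $T$, and since its size $q^m$ equals the total number of rows, $T$ occupies every row of $M$. Being a transfractal, $T$ is a fractal on its own columns; any column of $M$ lying outside $T$ would, by Lemma \ref{rows_of_transfractal}, carry one and the same entry in all $q^m$ rows, and by Lemma \ref{equality_in_single_column} this common entry can only be a star (a repeated numerical value would violate the equidistribution of values). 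Such an all-star column contradicts A-primitivity, so $M$ has no columns outside $T$, whence $M=T$ is a fractal.

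So the heart of the matter is to exhibit a full-size column. First I would record the elementary counting fact that, since $M$ is the star matrix of a partition into subcubes of dimension $n-m$, every row contains exactly $m$ numerical entries; analogously every row of a fractal $M_{q,l}$ contains exactly $l$ numerical entries, which follows directly from the recursive construction (the leftmost column contributes one number to each row, and each row then sits in a copy of $M_{q,l-1}$ contributing $l-1$ more). Now let $q^l$ be the maximal column size occurring in $M$; by Lemma \ref{only_transfractls} all column sizes are powers of $q$, and clearly $q^l\le q^m$. I would then argue by contradiction that $l=m$.

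Suppose $l<m$. Choose a column $\vec{s}$ of size $q^l$; by Lemma \ref{only_transfractls} it is the leading column of a transfractal $T$ of size $q^l$, and by Lemma \ref{rows_of_transfractal} the $q^l$ rows of $T$ are identical outside $T$. Each of these rows carries exactly $l$ numbers inside the columns of $T$, hence exactly $m-l\ge 1$ numbers outside $T$. Therefore there is a column $\vec{c}$ external to $T$ in which all $q^l$ rows of $T$ carry the same numerical value $a$. Consequently $a$ occurs at least $q^l$ times in $\vec{c}$, so by Lemma \ref{equality_in_single_column} every value of ${\bf Z}_q$ occurs at least $q^l$ times in $\vec{c}$, forcing the size of $\vec{c}$ to be at least $q^{l+1}>q^l$. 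This contradicts the maximality of $q^l$, so $l=m$ and a full-size column exists, which completes the reduction above.

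The step I expect to demand the most care is the contradiction in the last paragraph, where three ingredients must be combined precisely: the ``identical outside $T$'' conclusion of Lemma \ref{rows_of_transfractal}, the exact per-row count of numbers supplied by the fractal structure (which guarantees at least one external numerical entry when $l<m$), and the value-equidistribution of Lemma \ref{equality_in_single_column} (which inflates one repeated value into a column of size $\ge q^{l+1}$). Everything else is bookkeeping once these are lined up; the degenerate case $m=0$ (a single row, no columns, trivially equal to $M_{q,0}$) can be dispatched separately in one line.
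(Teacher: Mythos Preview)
Your argument is correct and complete, but it takes a different route from the paper's. Both proofs reduce to producing a full-size column and then invoking Lemma~\ref{only_transfractls} and Lemma~\ref{rows_of_transfractal} to conclude that $M$ equals the resulting transfractal. The difference lies in how that full-size column is found. The paper argues structurally: among columns that are not inserted into any other column, two distinct ones would (via Corollary~\ref{corollary_rows_of_transfractal}) be non-overlapping, and then a row through each would violate Lemma~\ref{lemma1}; hence there is a unique insertion-maximal column, and it is forced to be full-size because any star in it would be witnessed by a number in some non-inserted column. You instead argue quantitatively: take a column of maximal size $q^{l}$, note that each row of the associated transfractal carries exactly $l$ numbers inside and hence $m-l\ge 1$ numbers outside, and use Lemma~\ref{rows_of_transfractal} together with the equidistribution of Lemma~\ref{equality_in_single_column} to manufacture an external column of size at least $q^{l+1}$. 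Your route avoids Lemma~\ref{lemma1} and Corollary~\ref{corollary_rows_of_transfractal} entirely, trading them for the per-row count of numerical entries in a fractal; the paper's route avoids that count but leans on the insertion order among columns. Both are short and clean; yours is perhaps slightly more self-contained once Lemma~\ref{only_transfractls} is in hand.
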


\begin{proof}
By Lemma \ref{only_transfractls}, each column of $M$ is a leading column of some
transfractal. Suppose that $M$ has two columns $\vec{t}_1$ and $\vec{t}_2$ that do not insert into each other or into any other columns. Then by Corollary \ref{corollary_rows_of_transfractal}, the columns $\vec{t}_1$ and $\vec{t}_2$ do not overlap.
Let us take a row $\vec{r}_1$ with a number in the column $\vec{t}_1$ and let us take a row $\vec{r}_2$ with a number in the column $\vec{t}_2$.

From the definition of a transfractal and the Corollary \ref{corollary_rows_of_transfractal} it follows that the rows $\vec{r}_1$ and $\vec{r}_2$ do not have a column in which these rows contain different numbers (indeed, if such a column $\vec{s}$ exists, then it is not inserted into either $\vec{t}_1$ or $\vec{t}_2$, and therefore is external with respect to the transfractals in which $\vec{t}_1$ and $\vec{t}_2$ are leading columns, but then, by the Corollary \ref{corollary_rows_of_transfractal}, both the column $\vec{t}_1$ and the column $\vec{t}_2$ are inserted into $\vec{s}$, that contradicts to our assumption). Thus, we have obtained a contradiction with Lemma \ref{lemma1} and the assumption made about the existence of columns $\vec{t}_1$ and $\vec{t}_2$.

This means that the matrix $M$ has a column $\vec{t}_1$ such that any other column is inserted into $\vec{t}_1$.
Then the column $\vec{t}_1$ must contain numbers in all rows, because if it contains a star in some row $r$, then, since $M$ does not have a row of only stars, there will be column $\vec{t}_2$ with a number in the row $r$, but then the column $\vec{t}_2$ is not inserted into the column $\vec{t}_1$, which contradicts the statement made above. Thus, the column $\vec{t}_1$ contains numbers in all rows and, therefore, the column $\vec{t}_1$ is the leading column of the entire matrix $M$, which, in turn, is a fractal matrix.
\end{proof}

\begin{observation}\label{only_fractal_is_nonexpandable}
A fractal matrix is non-expandable.
\end{observation}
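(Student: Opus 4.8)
The plan is to show that for a fractal $M=M_{q,m}$, every bang leaves the number of columns unchanged (in fact it never increases), which is exactly non-expandability. Since a bang is determined by a choice of a column $\vec{i}$ and a value $a\in{\bf Z}_q$, it suffices to fix one such choice and compare the columns created against the columns destroyed. The natural framework is the transfractal machinery already in place: by the Observation that each fractal column is a leading column of some transfractal, $\vec{i}$ is the leading column of a unique transfractal $T$, whose size is a power $q^l$ of $q$. I would then simply run the bookkeeping of the five steps of the bang on this $T$.

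First I would count the columns that appear. By Lemma \ref{equality_in_single_column} the leading column $\vec{i}$ carries the value $a$ exactly $q^{l-1}$ times, so step~3 creates $q^{l-1}$ stripes of duplicated rows and step~4 appends exactly $q^{l-1}$ new columns. Against this I set the columns that vanish: the banged column $\vec{i}$ itself (step~1), together with those columns that turn into all-star columns and are erased in step~5. To find the latter I would use the decomposition of $T$ into the rod of $\vec{i}$ and $q$ smaller transfractals $T_0,\dots,T_{q-1}$ of size $q^{l-1}$, where $T_b$ has all of its numerical entries inside the sub-stripe $S_b$ of rows carrying the value $b$ in $\vec{i}$. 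Banging with $a$ deletes every row of the sub-stripes $S_b$ with $b\ne a$, so each of the $q-1$ transfractals $T_b$, $b\ne a$, loses all of its numbers, and every one of its $\frac{q^{l-1}-1}{q-1}$ columns (Property \ref{number_of_columns_in_fractal}) becomes all-star. This already produces at least $(q-1)\cdot\frac{q^{l-1}-1}{q-1}=q^{l-1}-1$ erased columns, so the number of deleted columns is at least $1+(q^{l-1}-1)=q^{l-1}$, matching the $q^{l-1}$ created columns and giving a net change of at most $0$.

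The step I expect to demand the most care is checking that nothing I have counted as surviving actually disappears, so that the count is not accidentally inflated past $q^{l-1}$ either; here Corollary \ref{corollary_rows_of_transfractal} does the work. The columns of $T_a$ retain their numbers because $S_a$ is kept (and duplicated). For a column $\vec{s}$ external to $T$, the corollary forces either that $\vec{s}$ misses $T$ entirely, in which case its numbers lie outside the deleted sub-stripes and are untouched, or that $\vec{i}$ is inserted into $\vec{s}$, in which case $\vec{s}$ carries numbers on the retained rows $S_a$ and so cannot become all-star. Hence exactly the $q^{l-1}-1$ columns of the $T_b$, $b\ne a$, are erased, the net change is precisely $0$, and the column count is preserved under every bang. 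In effect this is the equality case of the estimate obtained in the proof of Lemma \ref{only_transfractls}, so one could alternatively present the argument as the remark that a fractal saturates that inequality for each of its columns.
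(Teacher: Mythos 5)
Your proof is correct, but it follows a genuinely different route from the paper's. The paper proves the observation indirectly: non-expandable matrices must exist, because by Theorem \ref{coord_thm} the number of columns of an A-primitive matrix with $q^m$ rows is bounded, so one cannot keep expanding forever; by Corollary \ref{non_fractals_are_expandable} every non-fractal matrix is expandable; hence the non-expandable matrices can only be fractals. (The paper merely remarks that the fact ``can be seen from the proof of the Lemma \ref{only_transfractls}'' and explicitly declines to carry that out.) What you wrote is exactly that declined direct verification, done in full and, as far as I can check, done correctly: the banged column $\vec{i}$ leads a transfractal $T$ of size $q^l$; step 4 creates $q^{l-1}$ new columns; steps 1 and 5 destroy the column $\vec{i}$ together with the $(q-1)\cdot\frac{q^{l-1}-1}{q-1}=q^{l-1}-1$ columns of the sub-transfractals $T_b$, $b\ne a$, whose numbers all lie in the deleted stripes; and Corollary \ref{corollary_rows_of_transfractal} rules out any other column being erased, so the column count is exactly preserved under every bang. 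Each approach buys something. The paper's argument is shorter and computation-free, but it leans on the boundedness theorem imported from \cite{Tar22}, and, taken literally, it only establishes that some non-expandable matrix exists and must be a fractal; to conclude that \emph{every} fractal is non-expandable one must tacitly add that all fractals with parameters $(q,m)$ coincide up to row and column permutations and that expandability is invariant under such permutations. Your argument is self-contained modulo the transfractal machinery and yields strictly more: a bang leaves the number of columns of a fractal exactly unchanged (consistent with the paper's concluding claim that a fractal remains a fractal under any bang), information the existence argument cannot provide.
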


\begin{proof}
This fact can be seen from the proof of the Lemma \ref{only_transfractls}. However, it is not necessary to do this, since any matrix other than a fractal one is expandable by the Corollary \ref{non_fractals_are_expandable}, and non-expandable
matrices must exist due to the boundedness by Theorem \ref{coord_thm}
of the number of columns in A-primitive matrices for fixed $q$ and $m$.
\end{proof}

\section{Main Theorems}

\begin{theorem}\label{tight_values}
We have exact values $N^{\rm coord}_q(m)=\frac{q^m-1}{q-1}$,
$c^{\rm coord*}_q\left(\frac{q^m-1}{q-1},m\right)=\left(\frac{q^m-1}{q-1}\right)!$.

\end{theorem}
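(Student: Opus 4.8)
The plan is to recognise both equalities as statements about the \emph{maximum} number of columns an A-primitive star matrix of the relevant shape can have, and then to read off that maximum, together with the matrices attaining it, from the structural results already proved.

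First I would fix the correct ambient class of objects. An A-primitive partition of ${\bf Z}_q^n$ into $q^m$ subcubes of dimension $n-m$ is the same thing as an A-primitive star matrix with exactly $q^m$ rows, exactly $m$ numerical entries in every row (codimension $m$), and $n$ columns. By Theorem~\ref{coord_thm} the number $n$ of columns is bounded above for fixed $q$ and $m$, and by definition $N^{\rm coord}_q(m)$ is exactly the largest $n$ for which such a matrix exists. So it suffices to determine this maximal column count $N_{\max}$ and to describe the matrices realising it. The essential point, which I would verify directly from the description of the bang, is that the bang keeps us inside this class: it leaves the number of rows equal to $q^m$ and the per-row count of numerical entries equal to $m$, altering only the number of columns. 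Hence the bang acts on our class and changes only the quantity we are trying to maximise.

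Next I would exploit non-expandability. Any matrix attaining $N_{\max}$ must be non-expandable, for otherwise some bang would produce an A-primitive matrix of the same shape with strictly more columns, contradicting maximality. By Corollary~\ref{non_fractals_are_expandable} a non-expandable matrix is a fractal, and by Property~\ref{number_of_columns_in_fractal} a fractal $M_{q,m}$ has exactly $\frac{q^m-1}{q-1}$ columns; thus $N_{\max}=\frac{q^m-1}{q-1}$. Since the fractal $M_{q,m}$ is itself an A-primitive matrix of the right shape with this many columns (Properties~\ref{number_of_rows_in_fractal} and \ref{number_of_columns_in_fractal}, together with the fact that no fractal column consists only of stars), the value is attained, and therefore $N^{\rm coord}_q(m)=\frac{q^m-1}{q-1}$.

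For the second equality I would count at the extreme value $n=\frac{q^m-1}{q-1}$. Every A-primitive matrix of the right shape with this many columns already realises the maximal column count, so no bang can increase its columns; it is therefore non-expandable, hence a fractal by Corollary~\ref{non_fractals_are_expandable}. Conversely, every fractal is such a matrix. Thus the A-primitive partitions counted by $c^{\rm coord*}_q\!\left(\frac{q^m-1}{q-1},m\right)$ are precisely the fractal partitions, whose number is $\left(\frac{q^m-1}{q-1}\right)!$ by Property~\ref{number_of_different_partitions_for_fractal}, giving the claimed value.

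I expect the genuine difficulty to have been front-loaded into Lemma~\ref{only_transfractls} and Corollary~\ref{non_fractals_are_expandable}, which identify the non-expandable matrices as exactly the fractals; granting those, the theorem is a synthesis. The main points still requiring care are the two bookkeeping checks flagged above: that a bang preserves membership in the class (so that maximality really does force non-expandability), and that fractals are genuinely A-primitive and of the correct shape (so that the bound is attained rather than merely an upper estimate). Both are routine from the definitions of the bang and of the fractal.
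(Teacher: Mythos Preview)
Your proposal is correct and is essentially the paper's own argument written out in full. The paper's proof is a one-line citation of Corollary~\ref{non_fractals_are_expandable}, Observation~\ref{only_fractal_is_nonexpandable}, and Property~\ref{number_of_different_partitions_for_fractal}; you have simply unpacked this, correctly observing that a matrix with the maximal number of columns must be non-expandable (since the bang stays in the class), hence fractal, hence has $\frac{q^m-1}{q-1}$ columns, and that at this extreme the A-primitive matrices are exactly the fractals, counted by Property~\ref{number_of_different_partitions_for_fractal}.
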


\begin{proof}
Follows from Corollary \ref{non_fractals_are_expandable}, Observation \ref{only_fractal_is_nonexpandable}, and Property \ref{number_of_different_partitions_for_fractal}.
\end{proof}

The asymptotic formula (\ref{eq:main_asymptotics}), written out in {\bf Theorem \ref{th_asymp}}, follows
from Theorems \ref{general_form_of_asymptotics} and \ref{tight_values}.

\begin{corollary}
Let $q,m$ be fixed positive integer numbers, $q>1$. Then for $n\to\infty$
the number of partitions of the hypercube ${\bf Z}_q^n$ into subcubes of dimension at least $n-m$ is asymptotically equal to
$n^{\frac{q^m-1}{q-1}}$.
\end{corollary}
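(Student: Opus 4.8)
The plan is to imitate the decomposition behind \eqref{eq:coord_formula_modified}, now allowing mixed codimensions, and then to show that the extremal configurations are again exactly the fractals. Write $H=\frac{q^m-1}{q-1}$ and let $f_q(n,m)$ denote the number of unordered partitions of ${\bf Z}_q^n$ into subcubes of dimension at least $n-m$, i.e.\ of codimension at most $m$. Separating out the \emph{essential} coordinates (those fixed in at least one subcube), every such partition is obtained from a unique A-primitive codimension-$\le m$ partition on its $h$ essential coordinates by inserting $n-h$ all-star columns in one of $\binom{n}{h}$ ways. Hence
\[
f_q(n,m)=\sum_{h=0}^{H'} \binom{n}{h}\,D_q(h,m),
\]
where $D_q(h,m)$ is the number of A-primitive codimension-$\le m$ partitions of ${\bf Z}_q^h$ and $H'$ is the largest $h$ with $D_q(h,m)>0$. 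Each $D_q(h,m)$ is a constant independent of $n$, so the sum is a polynomial in $n$ of degree $H'$ with leading coefficient $D_q(H',m)/H'!$. The lower bound is immediate: since every codimension-exactly-$m$ partition is in particular a codimension-$\le m$ partition, $f_q(n,m)\ge c^{\rm coord}_q(n,m)\sim n^{H}$ by Theorem \ref{th_asymp}. Thus it suffices to prove $H'=H$ and $D_q(H,m)=H!$.

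For the bound $H'\le H$ I would use a \emph{refinement} step. Given an A-primitive codimension-$\le m$ partition $P$ of ${\bf Z}_q^h$ (with $h\ge m$; the cases $h<m\le H$ are trivially within the bound), cut every subcube of codimension $c<m$ into $q^{m-c}$ subcubes of codimension exactly $m$ by fixing $m-c$ of its free coordinates, using only the $h$ coordinates already present. The result $Q$ is a codimension-exactly-$m$ partition of the same ${\bf Z}_q^h$; since refinement only adds fixings, every coordinate essential in $P$ stays essential in $Q$, so $Q$ is A-primitive on all $h$ coordinates. By Theorem \ref{tight_values} an A-primitive codimension-$m$ partition has at most $H$ essential coordinates, whence $h\le H$. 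Applied to any $P$, this gives $H'\le H$; combined with the lower bound it already shows $\deg f_q(n,m)=H$.

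It remains to compute the leading coefficient, and this is the main obstacle: I must show $D_q(H,m)=H!$, i.e.\ that every A-primitive codimension-$\le m$ partition on exactly $H$ coordinates is in fact a genuine fractal with all subcubes of codimension $m$. Run the refinement above on such a $P$: the refined $Q$ is A-primitive codimension-$m$ on all $H$ coordinates, hence attains the maximum $N^{\rm coord}_q(m)=H$ of columns; therefore no bang can increase its number of columns, so $Q$ is non-expandable and, by Corollary \ref{non_fractals_are_expandable}, a fractal. The delicate point is to rule out a genuine coarsening, i.e.\ to show $P=Q$.

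For this I would prove, by induction on $m$ using the recursive block structure of Fig.~\ref{fig:recursive_construction_of_fractal_matrix}, that any nontrivial coarsening of a fractal creates an all-star column (a coordinate free in every subcube). Indeed, if some cell of the coarsening is free in the top leading column $c_0$, then because the off-diagonal blocks of the fractal consist only of stars, that cell is forced to be free in every coordinate and $P$ is the trivial one-cell partition; otherwise every cell keeps a fixed value of $c_0$, the coarsening splits over the $q$ diagonal sub-fractals, and by the inductive hypothesis one of them already exhibits a free coordinate $j$ lying in its block, which is then all-star across the whole matrix since $j$ is a star in all off-diagonal blocks. Either way A-primitivity on $H$ coordinates is violated unless $P=Q$. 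Hence the partitions counted by $D_q(H,m)$ are precisely the fractal partitions, of which there are exactly $H!$ by Property \ref{number_of_different_partitions_for_fractal}; so $f_q(n,m)\sim (H!/H!)\,n^H=n^H$, as claimed.
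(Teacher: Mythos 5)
Your proof is correct, and it begins exactly as the paper's does: the analogue of formula \eqref{eq:coord_formula_modified} for codimension at most $m$, which reduces the problem to finding the largest number $H'$ of columns of an A-primitive star matrix having at most $m$ numbers in each row, and to counting the matrices attaining it. The extremal step, however, is carried out by a genuinely different mechanism. The paper splits \emph{outward}: if some row has $c<m$ numbers, the corresponding subcube is split along a \emph{new} coordinate (as in the bang operation), producing an A-primitive matrix with one more column and still at most $m$ numbers per row; hence any column-maximal matrix must have exactly $m$ numbers in every row. This single observation yields simultaneously $H'=N^{\rm coord}_q(m)=\frac{q^m-1}{q-1}$ and the identification of the extremal matrices with the codimension-exactly-$m$ ones, counted by Theorem \ref{tight_values}. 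You instead refine \emph{inward}, along coordinates already present: this handles the degree bound $H'\le\frac{q^m-1}{q-1}$ cleanly via Theorem \ref{tight_values}, but it leaves the leading coefficient as a separate problem, which you solve by your induction that a nontrivial subcube-coarsening of a fractal forces an all-star column. That induction is sound --- a cell free in the leading column must contain subcubes of $Q$ from two different diagonal blocks, whose fixed-coordinate sets meet only in the leading column, so the cell is the whole cube; otherwise the coarsening splits over the $q$ diagonal sub-fractals and the inductive hypothesis produces a coordinate that is starred across the entire matrix --- and it even buys a slightly stronger structural fact, namely that every A-primitive partition of ${\bf Z}_q^{(q^m-1)/(q-1)}$ into subcubes of codimension at most $m$ is literally a fractal partition, with no mixed codimensions possible at the extremal width. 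The trade-off is cost: the paper's outward-splitting trick disposes of both the degree and the coefficient in two sentences, making the coarsening analysis, which is the bulk of your argument, unnecessary.
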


\begin{proof}
For the number of partitions into subcubes of dimension no less than $n-m$, a formula similar to (\ref{eq:coord_formula_modified}) is obviously valid, i.e. the question comes down to finding the
maximum number of columns in A-primitive matrices containing no more than $m$
numeric values in each row. However, if some row contains less than $m$ numerical values, then the corresponding
subcube can be split along a new coordinate into subcubes of smaller dimension, similar to what was done
during the operation of the bang, which leads to an increase in the columns of the matrix. Therefore, the maximum number of columns can only be achieved on a matrix containing exactly $m$ numbers in each row, i.e. on a matrix of A-primitive partitioning into subcubes of dimension $n-m$ each.
\end{proof}

\section{Conclusion}

Any star matrix, excepting a fractal, expands under some bang, whereas a fractal remains to be a fractal under any bang.

\end{document}